\DeclareMathOperator*{\argmin}{arg\,min}
\numberwithin{equation}{section}
\newcommand{\bel}[1]{\begin{equation}\label{#1}}
\newcommand{\be}{\begin{equation}}
\newcommand{\ba}{\begin{eqnarray}}
\newcommand{\ea}{\end{eqnarray}}
\newcommand{\qe}{\end{equation}}
\newcommand{\R}{{\mathbb R}}
\newcommand{\vol}{{\mathrm{vol}}}
\newcommand{\Hmm}[1]{\leavevmode{\marginpar{\tiny%
$\hbox to 0mm{\hspace*{-0.5mm}$\leftarrow$\hss}%
\vcenter{\vrule depth 0.1mm height 0.1mm width \the\marginparwidth}%
\hbox to
0mm{\hss$\rightarrow$\hspace*{-0.5mm}}$\\\relax\raggedright #1}}}
\theoremstyle{theorem}
\newtheorem{thm}{Theorem}[section]
\newtheorem{prop}{Proposition}[section]
\theoremstyle{example}
\newtheorem{example}{Example}[section]
\theoremstyle{corollary}
\newtheorem{coro}{Corollary}[section]
\theoremstyle{lemma}
\newtheorem{lemma}{Lemma}[section]
\theoremstyle{definition}
\newtheorem{defi}{Definition}[section]
\theoremstyle{proof}
\theoremstyle{remark}
\newtheorem{rem}{Remark}[section]
\begin{document}

\title[First eigenvalue estimates]{First eigenvalue estimates of Dirichlet-to-Neumann operators on graphs}

\author{Bobo Hua}
\address{School of Mathematical Sciences, LMNS, Fudan University, Shanghai 200433,
China; Shanghai Center for Mathematical Sciences, Fudan University, Shanghai 200433, China}
\email{bobohua@fudan.edu.cn}

\author{Yan Huang}
\address{School of Mathematical Sciences, Fudan University, Shanghai 200433, China.}
\email{yanhuang0509@gmail.com}

\author{Zuoqin Wang}
\address{School of Mathematical Sciences, University of Science and Technology of China, Hefei,
Anhui 230026, China.}
\email{wangzuoq@ustc.edu.cn}

\thanks{B.H. is supported in part by NSFC, NO.11401106. Z.W is supported in part by NSFC,  No. 11571131 and No. 11526212.}

\begin{abstract}
Following Escobar \cite{Escobar1997} and Jammes \cite{Jammes2015}, we introduce two types of isoperimetric constants and give lower bound estimates for the first nontrivial eigenvalues of Dirichlet-to-Neumann operators on finite graphs with boundary respectively.
\end{abstract}



\maketitle

\section{Introduction}
Let $(M,g)$ be a compact manifold with boundary $\partial{M}$. The Dirichlet-to-Neumann operator $\Lambda:H^{\frac{1}{2}}(\partial M)\longrightarrow H^{-\frac{1}{2}}(\partial M)$ is defined as
  $$\Lambda(f)=\frac{\partial u_f}{\partial n},$$
where $u_f$ is the harmonic extension of $f\in H^{\frac{1}{2}}(\partial M)$. The Dirichlet-to-Neumann operator is a first order elliptic pseudo-differential operator \cite[page 37]{Taylor1996} and its associated eigenvalue problem is also known as the Steklov problem, see \cite{Kuznetsov2014} for a historical discussion. Since $\partial M$ is compact, the spectrum of $\Lambda$ is nonnegative, discrete and unbounded \cite[page95]{Bandle1980}.

The Dirichlet-to-Neumann operator is closely related to the $\mathrm{Calder\acute{o}n}$
problem \cite{Calderon1980} of determining the anisotropic conductivity of a body from current and voltage measurements at its boundary. This makes it useful for applications to electrical impedance tomography, which is used in medical and geophysical imaging, see \cite{Uhlmann2014} for a recent survey.

Eigenvalue estimates are of interest in spectral geometry. In \cite{Cheeger1970}, Cheeger discovered a close relation between the geometric quantity, the isoperimetric constant (also called the Cheeger constant), and the analytic quantity, the first nontrivial eigenvalue of the Laplace-Beltrami operator on a closed manifold. Estimate of this type is called the Cheeger estimate.

For the first nontrivial eigenvalue of the Dirichlet-to-Neumann operator, two different types of lower bound estimates, which are similar to the classical Cheeger estimate, have been obtained by Escobar and Jammes respectively in \cite{Escobar1997} and \cite{Jammes2015}. For convenience, we call them the Escobar-type Cheeger estimate and the Jammes-type Cheeger estimate.

The Cheeger constant introduced by Escobar \cite{Escobar1997}, which we call the Escobar-type Cheeger constant, is defined as

$$h_E(M):=\inf_{\mathrm{Area}(\Omega\cap\partial M)\leq\frac{1}{2}\mathrm{Area}(\partial M)}\frac{\mathrm{Area}(\partial\Omega\cap\mathrm{int}(M))}{\mathrm{Area}(\Omega\cap\partial M)},$$
where $\mathrm{Area}(\cdot)$ denotes the codimensional one Hausdorff measure, i.e. the Riemannian area, and $\mathrm{int}(M)$ denotes the interior of the manifold $M$. Let $\sigma_1$ be the first nontrivial eigenvalue of the Dirichlet-to-Neumann operator $\Lambda$, then the Escobar-type Cheeger estimate \cite[Theorem 10]{Escobar1997} reads as
$$\sigma_1\geq\frac{\left(h_E(M)\mu_1(k)-ak\right)a}{a^2+\mu_1(k)},$$
where $a>0$, $k>0$ are arbitrary positive constants, and $\mu_1(k)$ is the first eigenvalue of the following Robin problem
\[
\left\{
       \begin{array}{ll}
       \Delta u+\mu_1(k)u=0 ,& \text{on\ }M, \\
       \frac{\partial u}{\partial n}+ku=0,& \text{on\ }\partial M.
               \end{array}
     \right.
\]

The Jammes-type Cheeger constant, which was introduced in \cite{Jammes2015}, is defined as
$$h_J(M)=\inf_{\mathrm{Vol}(\Omega)\leq\frac 12{\mathrm{Vol}(M)}}\frac{\mathrm{Area}(\partial\Omega\cap \mathrm{int}(M))}{\mathrm{Area}(\Omega\cap\partial M)},$$
where $\mathrm{Vol}(\cdot)$ denotes the Riemannian volume. The Jammes-type Cheeger estimate \cite[Theorem 1]{Jammes2015} is given as follows
$$\sigma_1\geq\frac{1}{4}h(M) h_J(M),$$
where $h(M)$ is the classical Cheeger constant associated to the Laplacian operator with Neumann boundary condition on $M$, which is defined as
$$h(M)=\inf_{\mathrm{Vol}(\Omega)\leq\frac 12{\mathrm{Vol}(M)}}\frac{\mathrm{Area}(\partial\Omega\cap \mathrm{int}(M))}{\mathrm{Vol}(\Omega)}.$$

The Dirichlet-to-Neumann operator is naturally defined in the discrete setting. We recall some basic definitions on graphs. Let $V$ be a countable set which serves as the set of vertices of a graph and
$\mu$ be a symmetric weight function given by
\[\aligned
\mu:V\times V & \to [0,\infty),\\
(x,y) &\mapsto \mu_{xy}=\mu_{yx}.
\endaligned\]
This induces a graph structure $G=(V,E)$ with the set of vertices $V$ and the set of edges $E$ which is defined as $\{x,y\}\in E$ if and only if $\mu_{xy}>0,$ in symbols $x\sim y.$ Note that we do allow self-loops in the graph, i.e. $x\sim x$ if $\mu_{xx}>0.$ Given $\Omega_1,\Omega_2\subset V$, the set of edges between $\Omega_1$ and $\Omega_2$ is denoted by
$$E(\Omega _1,\Omega _2):=\{e=\{x,y\}\in E|x\in\Omega _1,y\in\Omega _2\}.$$

For any subset $\Omega\subset V$, there are two notions of boundary, i.e. the edge boundary and the vertex boundary. The edge boundary of $\Omega$, denoted by $\partial\Omega$, is defined as
$$\partial\Omega:=E(\Omega,\Omega^c),$$
where $\Omega^c:=V\setminus\Omega$. The vertex boundary of $\Omega$, denoted by $\delta\Omega$, is defined as
$$\delta\Omega:=\{x\in V\setminus \Omega|\ x\sim y  \mathrm{\ for\ some\ } y\in \Omega\}.$$


Set $\overline{\Omega}:=\Omega\cup\delta\Omega$. We introduce a measure on $\overline{\Omega}$, $m:\overline{\Omega}\rightarrow(0,\infty)$, as follows
\[
m_x=\left\{
       \begin{array}{ll}
        \sum_{y\in V,y\sim x}\mu_{xy},& x\in\Omega, \\
        \sum_{y\in \Omega, y\sim x}\mu_{xy},&x\in\delta\Omega.
        \end{array}
     \right.
\]
Accordingly, $m(A):=\sum_{x\in A}m_x$ denotes the measure of any subset $A\subset\overline\Omega$. Given any set $F,$ we denote by $\R^F$ the collection of all real functions defined on $F.$

For any finite subset $\Omega\subset V$, in analogy to the Riemannian case, one can define the Dirichlet-to-Neumann operator in the discrete setting to be
\[\aligned
\Lambda:\R^{\delta\Omega} & \to\R^{\delta\Omega},
\\
\varphi  &\mapsto \Lambda\varphi:=\frac{\partial u_\varphi}{\partial n},
\endaligned\]
where $u_\varphi$ is the harmonic extension of $\varphi$ to $\Omega$, and $\frac \partial{\partial n}$ is the outward normal derivative in the discrete setting defined as in \eqref{neumannboundaryoperator} in section 2. We call $\Lambda$ the DtN operator for short. Let $\sigma(\Lambda)$ denote the spectrum of $\Lambda$. By the definition of $\Lambda$ and Green's formula, see Lemma \ref{Green's formula}, $\Lambda$ is a nonnegative self-adjoint operator, i.e. $\sigma(\Lambda)$ is a set of nonnegative real numbers. In fact, $\sigma(\Lambda)$ is contained in $[0,1]$, see Proposition \ref{eigenvaluebound}.

It is well known that the eigenvalues of (normalized) Laplace operators on finite graphs without any boundary condition are contained in $[0,2]$. The multiplicity of eigenvalue 0 is equal to the number of connected components of the graph, see \cite[Prop.1.3.7]{BrouwerHaemers2012}. The largest possible eigenvalue 2 is achieved if and only if the graph is bipartite, see \cite[Theorem 2.3]{Grigor'yan2009}. Similar results can be generalized to the case of DtN operators. For convenience, let $\widetilde{\Omega}$ denote the graph with vertices in $\overline{\Omega}$ and edges in $E(\Omega,\overline{\Omega})$, i.e. \begin{eqnarray}\label{graphinfact}\widetilde{\Omega}:=(\overline{\Omega},E(\Omega,\overline{\Omega})).\end{eqnarray}
From Proposition \ref{0eigenvalue}, we know that the multiplicity of eigenvalue 0 of $\Lambda$ is equal to the number of connected components of $\widetilde{\Omega}$. Moreover, we show that the eigenspace associated to the eigenvalue 1 which is the largest possible eigenvalue is the kernel of a linear operator $Q$ whose definition is given in \eqref{qoperator} below.



Given $A\subset\overline{\Omega}$, we denote by
\[\partial_{\Omega}A:=\partial A\cap E(\Omega,\overline{\Omega})\]
the relative boundary and $A^{\vee}:=\overline{\Omega}\setminus A$ the relative complement of $A$ in $\widetilde{\Omega}$. Without loss of generality, we always assume that $\widetilde{\Omega}$ is connected.

Now we consider the first nontrivial eigenvalue estimates of DtN operators on finite graphs. We introduce two isoperimetric constants following Escobar and Jammes, which we call Escobar-type Cheeger constant and Jammes-type Cheeger constant respectively.

\begin{defi}
The Escobar-type Cheeger constant for $\widetilde{\Omega}$ is defined as
$$h_E(\widetilde{\Omega}):=\inf_{m(A\cap\delta\Omega)\leq\frac12m(\delta\Omega)}\frac{\mu(\partial _{\Omega}A)}{m(A\cap\delta\Omega)}.$$
The Jammes-type Cheeger constant for $\widetilde{\Omega}$ is defined as
$$h_J(\widetilde{\Omega}):=\inf_{m(A)\leq \frac 12{m(\overline{\Omega})}}\frac{\mu(\partial _{\Omega}A)}{m(A\cap\delta\Omega)}.$$
\end{defi}
By definitions, one has $h_J(\widetilde{\Omega})\leq h_E(\widetilde{\Omega})$, see Proposition \ref{jammesescobarcompare}. This estimate is optimal, see Example \ref{exampleforjammesescobarcompare} in the paper.

To derive the Cheeger estimates, we first show that $h_E(\widetilde{\Omega})$ is equal to a type of Sobolev constant. Similar results can be found in both Riemannian and discrete case, see e.g. \cite{GeometryAnalysis2012,Chang2014,Chung97}.

\begin{thm}\label{sobolevconstant}
Let $G$ be a finite graph and $\Omega\subset V,$ then
$$h_E(\widetilde{\Omega})=\inf_{f\in\R^{\overline\Omega}}\frac{\sum_{e=\{x,y\}\in E(\Omega,\overline\Omega)}\mu_{xy}|f(x)-f(y)|}{\inf_{a\in\R}\sum_{x\in\delta\Omega}m_x|f(x)-a|}.$$
\end{thm}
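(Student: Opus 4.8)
The plan is to prove the two inequalities separately, using the standard coarea/layer-cake technique that relates isoperimetric constants to Sobolev-type constants.

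Let me denote the Sobolev constant on the right-hand side by $S$. First I would show $h_E(\widetilde\Omega) \ge S$, which is the easy direction. Given any $A \subset \overline\Omega$ with $m(A \cap \delta\Omega) \le \frac12 m(\delta\Omega)$, I would plug in the indicator function $f = \mathbf 1_A$ into the Rayleigh-type quotient defining $S$. For the numerator, $\sum_{e=\{x,y\}} \mu_{xy}|\mathbf 1_A(x) - \mathbf 1_A(y)|$ counts exactly the weight of edges with one endpoint in $A$ and the other outside, i.e. $\mu(\partial_\Omega A)$. For the denominator, I must check that $\inf_{a \in \R} \sum_{x \in \delta\Omega} m_x |\mathbf 1_A(x) - a|$ equals $m(A \cap \delta\Omega)$; this is where the normalization $m(A\cap\delta\Omega)\le\frac12 m(\delta\Omega)$ enters, since the optimal median value $a$ is $0$ precisely when $A\cap\delta\Omega$ is the smaller half. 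Taking the infimum over all such $A$ then gives $S \le h_E(\widetilde\Omega)$.

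The reverse inequality $S \ge h_E(\widetilde\Omega)$ is the substantive direction. Fix an arbitrary $f \in \R^{\overline\Omega}$; by replacing $f$ with $f - a$ for the optimal median $a$, I may assume the denominator equals $\sum_{x\in\delta\Omega} m_x |f(x)|$ and that both $\{x : f(x) > 0\}$ and $\{x : f(x) < 0\}$ meet $\delta\Omega$ in at most half its measure. The idea is to decompose $f$ into its positive and negative parts and apply a coarea formula to each. For the positive part, writing $f_+ = \int_0^\infty \mathbf 1_{\{f > t\}}\, dt$, the layer-cake identity gives
\[
\sum_{e=\{x,y\}} \mu_{xy}|f_+(x) - f_+(y)| = \int_0^\infty \mu\big(\partial_\Omega\{f > t\}\big)\, dt,
\]
and similarly $\sum_{x\in\delta\Omega} m_x f_+(x) = \int_0^\infty m(\{f>t\}\cap\delta\Omega)\, dt$. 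Since each superlevel set $\{f > t\}$ satisfies the constraint $m(\{f>t\}\cap\delta\Omega) \le \frac12 m(\delta\Omega)$, the definition of $h_E$ bounds each numerator integrand below by $h_E(\widetilde\Omega)$ times the corresponding denominator integrand, and integrating yields the desired ratio bound for $f_+$. Handling $f_-$ symmetrically and combining the two estimates completes the argument.

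The main obstacle I expect is bookkeeping at the boundary between the two signs: the edge-boundary term for the full function $f$ is not simply the sum of the edge-boundary terms for $f_+$ and $f_-$, because edges joining a vertex where $f>0$ to one where $f<0$ contribute $|f(x)-f(y)| = f_+(x)+f_-(y)$, and I must verify this splits correctly so that summing the two coarea estimates does not overcount or undercount these crossing edges. The cleanest way around this is to observe that for such a crossing edge the contribution to $\sum\mu_{xy}|f(x)-f(y)|$ dominates the sum of its contributions to the $f_+$ and $f_-$ functionals, so the inequality goes the right way; one also needs that $0$ lies between the positive and negative supports so that the level sets used for $f_+$ and those for $f_-$ (after reflecting) are genuinely on the correct side of the median, making the constraint $m(\cdot \cap \delta\Omega) \le \frac12 m(\delta\Omega)$ applicable to every level set. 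Once this verification is in place, taking the infimum over all $f$ gives $S \ge h_E(\widetilde\Omega)$, and together with the first direction the two constants coincide.
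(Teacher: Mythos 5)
Your proposal is correct and follows essentially the same route as the paper: the easy direction via the indicator function of an optimal set together with the median characterization of the $L^1$-minimizer, and the hard direction via median subtraction and the discrete coarea formula applied to level sets. The only cosmetic difference is that the paper keeps a single coarea integral $\int_{-\infty}^{\infty}G(\sigma)\,d\sigma$, using sublevel sets for $\sigma\le 0$ and superlevel sets for $\sigma>0$, which sidesteps your crossing-edge bookkeeping entirely --- and in fact your worry dissolves, since $|f(x)-f(y)|=|f_+(x)-f_+(y)|+|f_-(x)-f_-(y)|$ holds exactly for every edge because $f_+$ and $f_-$ have disjoint supports.
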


Without loss of generality, we assume that the number of vertices in $\delta\Omega$ is at least two. From now on, we denote by $\lambda_1(\Omega)$ the first nontrivial eigenvalue of the DtN operator $\Lambda$ on $\Omega$ in a finite graph. In the discrete setting, it's easy to obtain an upper bound estimate as
$$\lambda_1(\Omega)\leq 2h_E(\widetilde{\Omega}),$$
see Proposition \ref{upperbound}. The sharpness of this upper bound can be seen from Example \ref{exampleforupperbound}. For the lower bound estimate, we obtain the Escobar-type Cheeger estimate as follows.
\begin{thm}\label{escobarlowerbound}
Let $G$ be a finite graph and $\Omega\subset V,$ then
\begin{eqnarray}\label{escobarcheegerestimate}\lambda_1(\Omega)\geq\frac{\left(2h_E(\widetilde{\Omega})\mu_1(k)-a(k+\mu_1(k))\right)a}{a^2+2\mu_1(k)},\end{eqnarray}
where $a>0$, $k>0$ are arbitrary positive constants, and $\mu_1(k)$ is the first eigenvalue of the Robin problem
\[
\left\{
       \begin{array}{ll}
       \Delta u+\mu_1(k)u=0 ,& \text{on\ }\Omega, \\
       \frac{\partial u}{\partial n}+ku=0,& \text{on\ }\partial \Omega.
               \end{array}
     \right.
\]
\end{thm}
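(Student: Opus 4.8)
The plan is to analyze a fixed first nontrivial eigenfunction of $\Lambda$ through three ingredients that combine into one scalar inequality: the Rayleigh quotient characterizations of $\lambda_1(\Omega)$ and of the Robin eigenvalue $\mu_1(k)$, the isoperimetric content of $h_E(\widetilde{\Omega})$ repackaged via Theorem \ref{sobolevconstant} and a coarea argument, and Young's inequality, whose free exponent produces the parameter $a$. First I would set up the variational data. Let $\varphi\in\R^{\delta\Omega}$ be a first nontrivial eigenfunction, so $\sum_{x\in\delta\Omega}m_x\varphi(x)=0$, and let $u=u_\varphi\in\R^{\overline\Omega}$ be its harmonic extension. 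By Green's formula (Lemma \ref{Green's formula}) the Dirichlet energy satisfies $\sum_{e=\{x,y\}\in E(\Omega,\overline\Omega)}\mu_{xy}(u(x)-u(y))^2=\lambda_1(\Omega)B$, where $B:=\sum_{x\in\delta\Omega}m_x\varphi(x)^2$. The Robin eigenvalue enters through its own quotient: testing it against any $v\in\R^{\overline\Omega}$ gives $\mu_1(k)\sum_{x\in\Omega}m_xv(x)^2\le\sum_{e}\mu_{xy}(v(x)-v(y))^2+k\sum_{x\in\delta\Omega}m_xv(x)^2$, which I will later apply to a truncation of $u$ to control the interior $L^2$-mass by boundary data and energy.

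The core step trades the $L^2$ energy for an $L^1$ (coarea) energy to which $h_E(\widetilde{\Omega})$ applies directly. I would choose a weighted median $c$ of $\varphi$ on $\delta\Omega$ so that both $\{\varphi>c\}$ and $\{\varphi<c\}$ have $m$-measure at most $\tfrac12 m(\delta\Omega)$, set $v:=u-c$, and split $v=v_+-v_-$ into positive and negative parts on $\overline\Omega$. For $w\in\{v_+,v_-\}$ the function $w^2$ is nonnegative and every one of its superlevel sets meets $\delta\Omega$ in a set of measure at most $\tfrac12 m(\delta\Omega)$; hence the discrete coarea formula together with the defining inequality $\mu(\partial_{\Omega}A)\ge h_E(\widetilde{\Omega})\,m(A\cap\delta\Omega)$ yields $\sum_e\mu_{xy}|w(x)^2-w(y)^2|\ge h_E(\widetilde{\Omega})\sum_{x\in\delta\Omega}m_xw(x)^2$. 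Adding the two choices of $w$ and using $v_+(x)v_-(x)=0$ recovers $\sum_{x\in\delta\Omega}m_xv(x)^2$ on the right, which by the mean-zero normalization satisfies $\sum_{x\in\delta\Omega}m_xv(x)^2=B+c^2m(\delta\Omega)\ge B$.

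On the left I would factor $|w(x)^2-w(y)^2|=|w(x)-w(y)|\,(w(x)+w(y))$ and apply Young's inequality with parameter $a$, separating a gradient term $\sum_e\mu_{xy}(w(x)-w(y))^2$ from a zeroth-order term $\sum_e\mu_{xy}(w(x)+w(y))^2\le 2\sum_{x\in\overline\Omega}m_xw(x)^2$, the last bound coming from the definition of $m$. Summed over $w\in\{v_+,v_-\}$, the gradient term is at most $\sum_e\mu_{xy}(v(x)-v(y))^2=\lambda_1(\Omega)B$, since passing to positive and negative parts does not increase the energy and $v$ differs from $u$ by a constant; meanwhile the Robin inequality bounds the interior part of the zeroth-order term by $\lambda_1(\Omega)B$ and $\sum_{x\in\delta\Omega}m_xv(x)^2$. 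Substituting these estimates, dividing by $\sum_{x\in\delta\Omega}m_xv(x)^2$, and invoking $\sum_{x\in\delta\Omega}m_xv(x)^2\ge B$ (valid once the resulting coefficient is nonnegative, which is exactly the regime where the bound is not vacuous) leaves a linear inequality among $h_E(\widetilde{\Omega})$, $\lambda_1(\Omega)$, $\mu_1(k)$, $k$ and $a$; solving it for $\lambda_1(\Omega)$ produces \eqref{escobarcheegerestimate}, the admissible placements of $a$ in Young's inequality corresponding to the harmless reparametrization that matches the stated form.

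The main obstacle is the middle step. One must truncate at a level for which the superlevel sets are admissible competitors in $h_E(\widetilde{\Omega})$, and this is what forces the use of the median rather than the mean together with the simultaneous treatment of $v_+$ and $v_-$; one must also keep every term expressed through the single quantity $B$, for which the inequality $\sum_{x\in\delta\Omega}m_xv(x)^2\ge B$ arising from $\sum_{x\in\delta\Omega}m_x\varphi(x)=0$ is essential, together with the energy monotonicity $\sum_e\mu_{xy}(v_+(x)-v_+(y))^2+\sum_e\mu_{xy}(v_-(x)-v_-(y))^2\le\sum_e\mu_{xy}(v(x)-v(y))^2$. Verifying the coarea identity and the bound $\sum_e\mu_{xy}(w(x)+w(y))^2\le 2\sum_{x\in\overline\Omega}m_xw(x)^2$ in the presence of self-loops, and tracking the constants through the final rearrangement, are the remaining routine points.
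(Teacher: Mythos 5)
Your proposal is correct and follows essentially the same route as the paper's proof: center the eigenfunction at a boundary $L^1$-median, apply the coarea/Sobolev characterization of $h_E(\widetilde{\Omega})$ (Theorem \ref{sobolevconstant}) to the squared positive and negative parts, separate gradient and zeroth-order terms with a parameter $a$ via Cauchy--Schwarz/Young, control the interior $\ell^2$-mass by the Robin Rayleigh quotient, and conclude using $\sum_{x\in\delta\Omega}m_x(\varphi(x)-c)^2\geq\sum_{x\in\delta\Omega}m_x\varphi(x)^2$, which holds by the mean-zero normalization. Your explicit invocation of the energy splitting $D_\Omega(v_+)+D_\Omega(v_-)\leq D_\Omega(v)$ when summing the two parts is in fact the clean justification of the paper's corresponding summation step, which as written there bounds each part by the full energy of $u$ and is slightly loose at that point.
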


Analogous to the Riemannian case, we obtain Jammes-type Cheeger estimate following \cite{Jammes2015}.
\begin{thm}\label{thm:main3}
Let $G$ be a finite graph and $\Omega\subset V,$ then
$$\lambda_1(\Omega)\geq \frac{1}{2}h(\widetilde{\Omega})h_J(\widetilde{\Omega}),$$
where $h(\widetilde{\Omega})$ is the classical Cheeger constant of the graph $\widetilde{\Omega}$ viewed as a graph without any boundary condition. 
\end{thm}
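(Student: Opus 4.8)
The plan is to bound the Rayleigh quotient of the DtN operator from below by first reducing to one-signed functions and then combining the two isoperimetric constants through the coarea formula. Denote by $D(\varphi):=\sum_{\{x,y\}\in E(\Omega,\overline\Omega)}\mu_{xy}(\varphi(x)-\varphi(y))^2$ the Dirichlet energy of $\varphi\in\R^{\overline\Omega}$. Since the harmonic extension minimises $D$ among all extensions of given boundary data, the first nontrivial eigenvalue admits the variational description
\[
\lambda_1(\Omega)=\inf\Big\{\tfrac{D(f)}{\sum_{x\in\delta\Omega}m_x f(x)^2}:f\in\R^{\overline\Omega},\ \sum_{x\in\delta\Omega}m_x f(x)=0\Big\}.
\]
Fix an admissible $f$, let $c$ be a median of $f$ on $\overline\Omega$ with respect to $m$, and set $f_+=(f-c)_+$ and $f_-=(f-c)_-$. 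A direct check on each edge gives $D(f)=D(f-c)\ge D(f_+)+D(f_-)$, while the normalisation $\sum_{x\in\delta\Omega}m_x f(x)=0$ yields $\sum_{x\in\delta\Omega}m_x f(x)^2\le\sum_{x\in\delta\Omega}m_x(f-c)(x)^2=\sum_{x\in\delta\Omega}m_x f_+(x)^2+\sum_{x\in\delta\Omega}m_x f_-(x)^2$. By the elementary inequality $\tfrac{a+b}{c+d}\ge\min\{\tfrac ac,\tfrac bd\}$ it therefore suffices to prove, for a nonnegative $g\in\{f_+,f_-\}$ whose support satisfies $m(\{g>0\})\le\tfrac12 m(\overline\Omega)$ by the choice of median, the estimate
\[
D(g)\ \ge\ \tfrac12\,h(\widetilde{\Omega})\,h_J(\widetilde{\Omega})\sum_{x\in\delta\Omega}m_x\,g(x)^2 .
\]

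To prove this core estimate I would pass to $g^2$ and use the discrete coarea formula $\sum_{\{x,y\}\in E(\Omega,\overline\Omega)}\mu_{xy}\,|g(x)^2-g(y)^2|=\int_0^\infty\mu\big(\partial_\Omega\{g^2>t\}\big)\,dt$. Every superlevel set $A_t=\{g^2>t\}$ is contained in the support of $g$, hence $m(A_t)\le\tfrac12 m(\overline\Omega)$, so both isoperimetric constants apply to it. Integrating the classical Cheeger bound $\mu(\partial_\Omega A_t)\ge h(\widetilde{\Omega})\,m(A_t)$ and the Jammes bound $\mu(\partial_\Omega A_t)\ge h_J(\widetilde{\Omega})\,m(A_t\cap\delta\Omega)$ over $t$ gives respectively
\[
\sum_{\{x,y\}\in E(\Omega,\overline\Omega)}\mu_{xy}\,|g(x)^2-g(y)^2|\ \ge\ h(\widetilde{\Omega})\sum_{x\in\overline\Omega}m_x\,g(x)^2,
\]
\[
\sum_{\{x,y\}\in E(\Omega,\overline\Omega)}\mu_{xy}\,|g(x)^2-g(y)^2|\ \ge\ h_J(\widetilde{\Omega})\sum_{x\in\delta\Omega}m_x\,g(x)^2 .
\]
On the other hand, factoring $g(x)^2-g(y)^2=(g(x)-g(y))(g(x)+g(y))$ and applying Cauchy--Schwarz together with $(g(x)+g(y))^2\le 2(g(x)^2+g(y)^2)$ yields
\[
\sum_{\{x,y\}\in E(\Omega,\overline\Omega)}\mu_{xy}\,|g(x)^2-g(y)^2|\ \le\ \sqrt{2\,D(g)}\,\Big(\sum_{x\in\overline\Omega}m_x\,g(x)^2\Big)^{1/2}.
\]

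Feeding the classical Cheeger lower bound into this last inequality eliminates the interior term: writing $T$ for the common left-hand side, one gets $T\le\sqrt{2D(g)}\,(T/h(\widetilde{\Omega}))^{1/2}$, hence $D(g)\ge\tfrac12 h(\widetilde{\Omega})\,T$, and combining with the Jammes lower bound for $T$ produces exactly the desired estimate $D(g)\ge\tfrac12 h(\widetilde{\Omega})h_J(\widetilde{\Omega})\sum_{x\in\delta\Omega}m_x g(x)^2$. Assembling the two reductions bounds the Rayleigh quotient of every admissible $f$ from below by $\tfrac12 h(\widetilde{\Omega})h_J(\widetilde{\Omega})$, and passing to the infimum gives the claim. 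I expect the main obstacle to be this central combination: the two constants must be applied to the level sets of the \emph{single} auxiliary function $g^2$, and the order matters, since it is the classical constant $h(\widetilde{\Omega})$ that is used---through Cauchy--Schwarz---to absorb the interior norm $\sum_{x\in\overline\Omega}m_x g^2$ which would otherwise spoil the estimate, leaving the Jammes constant free to generate the boundary term. A minor technical point is the degenerate case in which one of $f_\pm$ vanishes on $\delta\Omega$; this is handled by the mediant inequality, since the normalisation $\sum_{x\in\delta\Omega}m_x f(x)=0$ forces at least one half to be nontrivial on the boundary.
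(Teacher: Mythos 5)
Your proposal is correct, and its core is the same engine as the paper's: the discrete coarea formula applied to the superlevel sets $A_t$ of $g^2$, with the \emph{same} integral $\int_0^\infty\mu(\partial_\Omega A_t)\,dt$ bounded once by $h(\widetilde{\Omega})\sum_{x\in\overline{\Omega}}m_xg^2(x)$ and once by $h_J(\widetilde{\Omega})\sum_{x\in\delta\Omega}m_xg^2(x)$, combined through Cauchy--Schwarz and the identity $\sum_{e=\{x,y\}\in E(\Omega,\overline{\Omega})}\mu_{xy}(g^2(x)+g^2(y))=\sum_{x\in\overline{\Omega}}m_xg^2(x)$; this is exactly what the paper does in Lemmas \ref{lemma2}--\ref{lemma4} and the final display, and your observation that $h(\widetilde{\Omega})$ must be the constant that absorbs the interior norm matches the paper's arrangement precisely. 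Where you genuinely diverge is the reduction to a one-signed function of small support. The paper works only with the first eigenfunction: it chooses the sign of $f$ so that $m(\overline{\Omega}^+)\le\frac12 m(\overline{\Omega})$, truncates at level $0$, and exploits the eigenvalue equation by pairing $\Delta f$ with the positive part $g$ (Lemma \ref{lemma1}), which directly yields the restricted quotient $\lambda_1(\Omega)\ge\sum_{e\in E(\overline{\Omega}^+,\overline{\Omega})}\mu_{xy}(g(x)-g(y))^2\big/\sum_{x\in\delta^{+}\Omega}m_xg^2(x)$ with no median and no mediant inequality. You instead bound the Rayleigh quotient of an \emph{arbitrary} admissible test function: truncate at an $m$-median $c$ over $\overline{\Omega}$, split into $(f-c)_\pm$, use the edgewise inequality $D(f-c)\ge D(f_+)+D(f_-)$, the orthogonality $\sum_{x\in\delta\Omega}m_xf(x)=0$ to get $\sum_{x\in\delta\Omega}m_xf^2(x)\le\sum_{x\in\delta\Omega}m_x(f-c)^2(x)$, and then the mediant inequality. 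Your route is more elementary (no eigen-equation, no nodal decomposition of the harmonic extension) and slightly stronger in scope, since it bounds the whole variational functional from below rather than just its value at the eigenfunction; the paper's eigenfunction argument is shorter at this stage because the eigen-equation hands it the restricted quotient for free, at the cost of the sign-choice for $m(\overline{\Omega}^+)$. All your auxiliary steps check out, including the degenerate case you flag at the end, which is in fact entirely harmless: the core estimate $D(g)\ge\frac12h(\widetilde{\Omega})h_J(\widetilde{\Omega})\sum_{x\in\delta\Omega}m_xg^2(x)$ holds trivially when $g$ vanishes on $\delta\Omega$, so the mediant inequality needs no special care there.
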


\begin{rem}
The Jammes-type Cheeger estimate is asymptotically sharp, see Example \ref{exapmle}.
\end{rem}

For the completeness, we recall the definition of $h(\widetilde{\Omega})$,
$$h(\widetilde{\Omega}):=\inf_{m(A)\leq\frac{1}{2}m(\overline\Omega)}\frac{\mu(\partial_{\Omega}A)}{m(A)},$$
where $A$ is any nonempty subset of $\overline\Omega$, see e.g. \cite[p.24]{Chung97}. By the definitions of $h(\widetilde{\Omega})$ and $h_J(\widetilde{\Omega})$, one is ready to see that $h(\widetilde{\Omega})\leq h_J(\widetilde{\Omega})$. Notably, the classical Cheeger estimate uses only one Cheeger constant while the Jammes-type Cheeger estimate involves two. One may ask whether $\lambda_1(\Omega)$ can be bounded from below using only $h_J(\widetilde{\Omega})$. The answer is negative and we give a counterexample in Example \ref{exapmle}.

As a corollary of Theorem \ref{thm:main3}, we have the following interesting eigenvalue estimate, which has no counterpart in the Riemannian setting.
\begin{coro}\label{application}
$$\lambda_1(\Omega)\geq\frac{1}{8}(\zeta_1(\widetilde{\Omega}))^2,$$
where $\zeta_1(\widetilde{\Omega})$ is the first nontrivial eigenvalue of the Laplace operator with no boundary condition on $\widetilde{\Omega}$.
\end{coro}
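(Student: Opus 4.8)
The plan is to obtain the bound by chaining Theorem \ref{thm:main3} with two elementary inequalities, so that no genuinely new analysis is required. Theorem \ref{thm:main3} already furnishes
\[
\lambda_1(\Omega)\geq \frac{1}{2}h(\widetilde{\Omega})h_J(\widetilde{\Omega}),
\]
and the entire task reduces to rewriting the right-hand side in terms of $\zeta_1(\widetilde{\Omega})$ alone.

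First I would invoke the comparison $h(\widetilde{\Omega})\leq h_J(\widetilde{\Omega})$ recorded just above the statement. This is immediate from the definitions: both constants are infima over the same admissible family $\{A:m(A)\leq\frac12 m(\overline\Omega)\}$ and share the numerator $\mu(\partial_\Omega A)$, while the denominator satisfies $m(A\cap\delta\Omega)\leq m(A)$ because $A\cap\delta\Omega\subseteq A$. Substituting the smaller quantity $h(\widetilde{\Omega})$ for $h_J(\widetilde{\Omega})$ in Theorem \ref{thm:main3} gives
\[
\lambda_1(\Omega)\geq \frac{1}{2}h(\widetilde{\Omega})^2.
\]
Second, I would apply the classical (easy direction of the) Cheeger inequality for the normalized Laplace operator on the finite graph $\widetilde{\Omega}$ regarded with no boundary condition, namely $\zeta_1(\widetilde{\Omega})\leq 2h(\widetilde{\Omega})$, which follows by testing the Rayleigh quotient of the Laplacian with the indicator function of an optimal Cheeger set (see, e.g., \cite{Chung97}). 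Equivalently $h(\widetilde{\Omega})\geq \frac12\zeta_1(\widetilde{\Omega})$, and inserting this into the previous display yields
\[
\lambda_1(\Omega)\geq \frac{1}{2}\left(\frac{1}{2}\zeta_1(\widetilde{\Omega})\right)^2=\frac{1}{8}\zeta_1(\widetilde{\Omega})^2,
\]
which is the assertion.

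The only point demanding care—and the sole, rather modest, obstacle—is to confirm that the measure $m$ on $\overline{\Omega}$ and the edge weights on $E(\Omega,\overline{\Omega})$ that define $\zeta_1(\widetilde{\Omega})$ are exactly the ones entering $h(\widetilde{\Omega})$, so that the textbook constant $2$ in $\zeta_1\leq 2h$ is correct here. Since $h(\widetilde{\Omega})$ and $\zeta_1(\widetilde{\Omega})$ are both attached to the graph $\widetilde{\Omega}=(\overline{\Omega},E(\Omega,\overline{\Omega}))$ with the \emph{same} measure $m$, the normalization is consistent and the classical inequality applies verbatim; the standing assumption that $\widetilde{\Omega}$ is connected guarantees $\zeta_1(\widetilde{\Omega})>0$, so the estimate is nonvacuous. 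I expect this verification to be essentially a bookkeeping check rather than a substantive difficulty.
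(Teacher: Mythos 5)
Your proposal is correct and follows exactly the paper's own route: Theorem \ref{thm:main3} combined with $h(\widetilde{\Omega})\leq h_J(\widetilde{\Omega})$ to get $\lambda_1(\Omega)\geq\frac12 h(\widetilde{\Omega})^2$, then the easy direction $\zeta_1(\widetilde{\Omega})\leq 2h(\widetilde{\Omega})$ of the classical Cheeger inequality \eqref{finitecheegerestimate}. Your added checks (that the two constants use the same graph $\widetilde{\Omega}$, measure $m$, and numerator $\mu(\partial_\Omega A)$, and that connectedness makes the bound nonvacuous) are accurate but not needed beyond what the paper records.
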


The organization of the paper is as follows: In Section 2, we collect some basic facts about the DtN operators on graphs. In Section 3, we study the spectrum of the DtN operators. In Section 4 and Section 5, we give the proof of the main theorems: Theorem \ref{escobarlowerbound} and \ref{thm:main3}.

\section{preliminaries}

Let $(X,\nu)$ be a discrete measure space, i.e. $X$ is a discrete space equipped with a Borel measure $\nu.$ The spaces of $\ell^p,$ $p\in[1,\infty]$, summable functions on $(X,\nu)$, are defined routinely: Given a function $f\in\R^X,$ for $p\in[1,\infty)$, we denote by
$$\|f\|_{\ell^p}=\left(\sum_{x\in X}|f(x)|^p\nu(x)\right)^{1/p},$$
the $\ell^p$ norm of $f.$ For $p=\infty,$
$$\|f\|_{\ell^\infty}=\sup_{x\in X}|f(x)|.$$
Let $\ell^p(X,\nu):=\{f\in\R^X| \|f\|_{\ell^p}<\infty\}$ be the space of $\ell^p$ summable functions on $(X,\nu).$ In our setting, these definitions apply to $(X,\nu)=(\Omega,m)$ or $(\delta\Omega,m).$ The case where $p=2$ is of particular interest, as we have the Hilbert spaces $\ell^2(\Omega,m):=\{f\in\R^\Omega|\|f\|_{\ell^2}<\infty\}$ and $\ell^2(\delta\Omega,m):=\{\varphi\in\R^{\delta\Omega}|\|\varphi\|_{\ell^2}<\infty\}$ equipped with the standard inner products
\[\aligned
\langle f,g\rangle_{\Omega} & =\sum_{x\in \Omega}f(x)g(x)m_x,\quad f,g:\Omega\to\R,
\\
\langle \varphi,\psi\rangle_{\delta\Omega} &=\sum_{x\in\delta\Omega}\varphi(x)\psi(x)m_x,\quad \varphi,\psi:\delta\Omega\to\R.
\endaligned\]


Given $\Omega\subset V,$ an associated quadratic form is defined as
$$D_\Omega(f,g)=\sum_{e=\{x,y\}\in E(\Omega,\overline{\Omega})}\mu_{xy}(f(x)-f(y))(g(x)-g(y)), \quad f, g\in\R^{\overline{\Omega}}.$$
The Dirichlet energy of $f\in\R^{\overline{\Omega}}$ can be written as
\[D_\Omega(f):=D_\Omega(f,f).\]
For any $f\in\R^{\overline{\Omega}},$ the Laplacian operator is defined as
$$\Delta f(x):=\frac{1}{m_x}\sum_{y\in V: y\sim x}\mu_{xy}(f(y)-f(x)),\quad x\in \Omega$$
and the outward normal derivative of $f$ is defined as
\begin{equation}\label{neumannboundaryoperator}\frac{\partial f}{\partial n}(z):=\frac{1}{m_z}\sum_{x\in\Omega: x\sim z}\mu_{zx}(f(z)-f(x)),\quad z\in\delta\Omega.\end{equation}

We recall the following two well-known results on Laplace operators.
\begin{lemma}\label{Green's formula}
(Green's formula) Let $f,g\in \mathbb R^{\overline{\Omega}}.$ Then
\begin{equation}\label{eq:green formula}\langle \Delta f,g\rangle_\Omega=-D_{\Omega}(f,g)+\langle \frac{\partial f}{\partial n},g\rangle_{\delta\Omega}.\end{equation}
\end{lemma}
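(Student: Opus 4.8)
The plan is to prove the identity by a discrete summation-by-parts, reorganizing every sum as a sum over oriented edges and matching the resulting terms. First I would start from the left-hand side and unfold the definitions. Since $\langle \Delta f, g\rangle_\Omega = \sum_{x\in\Omega}\Delta f(x)\,g(x)\,m_x$ and the normalization $1/m_x$ in $\Delta f(x)$ cancels the measure $m_x$, one obtains
$$\langle \Delta f, g\rangle_\Omega = \sum_{x\in\Omega}\,\sum_{y\sim x}\mu_{xy}\bigl(f(y)-f(x)\bigr)g(x).$$
Because $x\in\Omega$, every neighbor $y$ of $x$ lies in $\overline\Omega$, so this is the sum of the oriented quantity $\mu_{xy}(f(y)-f(x))g(x)$ over all oriented edges of $\widetilde\Omega$ whose tail sits in $\Omega$. (Self-loops contribute zero and may be ignored.)

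Next I would rewrite the Dirichlet form. The elementary identity
$$(f(x)-f(y))(g(x)-g(y)) = (f(x)-f(y))g(x) + (f(y)-f(x))g(y)$$
shows that the contribution of each unoriented edge $\{x,y\}\in E(\Omega,\overline\Omega)$ equals the sum over its two orientations of $\mu_{xy}\bigl(f(\mathrm{tail})-f(\mathrm{head})\bigr)g(\mathrm{tail})$. Hence
$$D_\Omega(f,g)=\sum_{(x\to y)}\mu_{xy}\bigl(f(x)-f(y)\bigr)g(x),$$
the sum running over all oriented edges obtained from $E(\Omega,\overline\Omega)$. I would then split these oriented edges according to where the tail lies.

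The decisive step is the bookkeeping of $E(\Omega,\overline\Omega)$. Its edges are of two kinds: \emph{interior} edges with both endpoints in $\Omega$, and \emph{boundary} edges with one endpoint in $\Omega$ and one in $\delta\Omega$; crucially, no edge has both endpoints in $\delta\Omega$. Therefore the oriented edges with tail in $\Omega$ are exactly the pairs $(x,y)$ with $x\in\Omega$, $y\sim x$ (so $y\in\overline\Omega$), and their total contribution reproduces $-\langle \Delta f, g\rangle_\Omega$. The oriented edges with tail in $\delta\Omega$ come only from boundary edges, oriented as $z\to x$ with $z\in\delta\Omega$, $x\in\Omega$; summing $\mu_{zx}(f(z)-f(x))g(z)$ over these and using that $m_z$ counts precisely the weights of edges from $z$ back into $\Omega$, one recovers $\langle \partial f/\partial n, g\rangle_{\delta\Omega}$. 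Adding the two pieces gives $D_\Omega(f,g) = -\langle \Delta f, g\rangle_\Omega + \langle \partial f/\partial n, g\rangle_{\delta\Omega}$, which is the claim after rearrangement.

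The only genuinely delicate point is this edge accounting in the presence of the asymmetric definitions of $E(\Omega,\overline\Omega)$ and of the boundary measure: one must verify that edges lying entirely within $\delta\Omega$ never enter either side (they are absent from $\widetilde\Omega$), and that the definition of $m_z$ for $z\in\delta\Omega$, which sums $\mu_{zx}$ only over $x\in\Omega$, is exactly what makes the normal-derivative term appear with the correct weight. Once the orientation classes are matched, the identity is immediate and no estimates are required.
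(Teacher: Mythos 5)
Your proof is correct: the reorganization of $\langle\Delta f,g\rangle_\Omega$, $D_\Omega(f,g)$, and $\langle \partial f/\partial n, g\rangle_{\delta\Omega}$ as sums over oriented edges of $E(\Omega,\overline\Omega)$, split according to whether the tail lies in $\Omega$ or in $\delta\Omega$, is exactly the standard summation-by-parts argument, and your bookkeeping (all neighbors of $x\in\Omega$ lie in $\overline\Omega$; edges within $\delta\Omega$ are absent from $E(\Omega,\overline\Omega)$; self-loops contribute zero; the factor $m_z$ in the boundary inner product cancels the $1/m_z$ in the normal derivative) is accurate. The paper itself gives no proof of this lemma, deferring to the reference \cite{Grigor'yan2009}, so your argument simply supplies the expected standard proof; the only quibble is that the identity would hold for \emph{any} choice of positive weights $m_z$ on $\delta\Omega$ (since the same $m_z$ appears in both $\partial f/\partial n$ and $\langle\cdot,\cdot\rangle_{\delta\Omega}$ and cancels), so the specific definition of $m_z$ is not actually needed for this cancellation, contrary to the emphasis in your closing paragraph.
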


\begin{lemma}\label{harmonic extension}
  Given any $\varphi\in\R^{\delta\Omega},$ there is a unique function $u_{\varphi}\in\R^{\overline{\Omega}}$ satisfying the Laplace equation
  \begin{equation}\label{eq:eq1}\Delta u_\varphi(x)=0,\quad x\in\Omega,\end{equation}
and the boundary condition
\begin{equation*}u_\varphi(x)=\varphi(x),\quad x\in\delta\Omega.\end{equation*}
\end{lemma}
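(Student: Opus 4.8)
The plan is to treat the problem as a square linear system and to deduce existence from uniqueness via an energy identity. First I would observe that the values of $u_\varphi$ on $\delta\Omega$ are already fixed to be $\varphi$, so the genuine unknowns are the values $\{u_\varphi(x)\}_{x\in\Omega}$, and the harmonicity requirement $\Delta u_\varphi(x)=0$ imposes exactly one linear equation for each $x\in\Omega$. Thus we are solving a linear system of $|\Omega|$ equations in $|\Omega|$ unknowns. By the rank--nullity theorem it therefore suffices to show that the associated homogeneous problem---harmonic on $\Omega$ with vanishing boundary data---has only the trivial solution; this single fact yields both existence and uniqueness simultaneously.

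For the homogeneous problem I would take any $w\in\R^{\overline\Omega}$ with $\Delta w(x)=0$ for $x\in\Omega$ and $w|_{\delta\Omega}=0$, and apply Green's formula (Lemma \ref{Green's formula}) with $f=g=w$. Since $\Delta w\equiv 0$ on $\Omega$ the left-hand side of \eqref{eq:green formula} vanishes, and since $w=0$ on $\delta\Omega$ the boundary term $\langle \frac{\partial w}{\partial n},w\rangle_{\delta\Omega}$ vanishes as well, leaving
\[ 0=-D_\Omega(w). \]
As $D_\Omega(w)=\sum_{\{x,y\}\in E(\Omega,\overline\Omega)}\mu_{xy}(w(x)-w(y))^2$ is a sum of nonnegative terms, every term must be zero, so $w(x)=w(y)$ for each edge $\{x,y\}$ of $\widetilde\Omega$.

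It then remains to invoke connectedness. The vanishing of all edge-differences means $w$ is constant on each connected component of $\widetilde\Omega=(\overline\Omega,E(\Omega,\overline\Omega))$; under the standing assumption that $\widetilde\Omega$ is connected, $w$ is constant on all of $\overline\Omega$, and since $w=0$ on the nonempty set $\delta\Omega$ this constant is $0$, i.e. $w\equiv 0$. Hence the homogeneous Dirichlet problem has only the trivial solution, and consequently the inhomogeneous problem has a unique solution $u_\varphi$ for every $\varphi\in\R^{\delta\Omega}$. I do not anticipate any serious difficulty here; the only point demanding care is that the energy computation must be pushed all the way to $w\equiv 0$ rather than merely $w\equiv\mathrm{const}$, which is precisely where connectedness of $\widetilde\Omega$ together with $\delta\Omega\neq\emptyset$ enters.
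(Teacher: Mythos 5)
Your proof is correct and complete, but note that the paper itself contains no proof of this lemma: the remark after the statement simply refers the reader to \cite{Grigor'yan2009}. So the useful comparison is with the standard argument in that reference, which also reduces the Dirichlet problem to a square linear system in the unknowns $\{u_\varphi(x)\}_{x\in\Omega}$ and deduces existence from uniqueness by finite-dimensional linear algebra, but typically establishes uniqueness via a discrete maximum principle (a function harmonic on $\Omega$ attains its extrema on $\delta\Omega$). Your uniqueness step instead runs through the energy identity: applying Lemma \ref{Green's formula} with $f=g=w$ gives $D_\Omega(w)=0$, so $w$ is constant on each connected component of $\widetilde{\Omega}=(\overline{\Omega},E(\Omega,\overline{\Omega}))$, and then vanishes by the boundary condition. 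This buys two things: it reuses a lemma the paper has already stated rather than requiring a separate maximum-principle argument, and it makes explicit exactly which hypotheses are load-bearing --- namely that every component of $\widetilde{\Omega}$ meets the nonempty set $\delta\Omega$, which is guaranteed here by the paper's standing assumption that $\widetilde{\Omega}$ is connected. You are right to flag this: if some component of $\widetilde{\Omega}$ contained no boundary vertex, any constant on that component would lie in the kernel of the homogeneous problem and uniqueness would genuinely fail, so the connectedness assumption (or the weaker requirement that each component meets $\delta\Omega$) cannot be dispensed with. Your argument in fact proves the lemma in this slightly more general component-by-component form, which is the correct statement if one drops the ``without loss of generality'' normalization.
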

\begin{rem} We will denote by $u_{\varphi}$ the harmonic extension of $\varphi\in\mathbb{R}^{\delta\Omega}$ to $\overline{\Omega}$ in this paper. For the proof of Lemma \ref{Green's formula} and \ref{harmonic extension}, one can see e.g. \cite{Grigor'yan2009}.
\end{rem}
\begin{prop}
$\Lambda$ is a nonnegative self-adjoint linear operator on $\ell^2(\delta\Omega,m)$.
\end{prop}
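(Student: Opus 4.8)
The plan is to establish the three asserted properties---linearity, self-adjointness, and nonnegativity---in that order, with Green's formula (Lemma \ref{Green's formula}) doing essentially all of the work. The central identity I would aim to prove first is
$$\langle \Lambda\varphi,\psi\rangle_{\delta\Omega}=D_\Omega(u_\varphi,u_\psi),\qquad \varphi,\psi\in\R^{\delta\Omega},$$
from which everything else follows by inspection.

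First I would dispose of linearity. By Lemma \ref{harmonic extension} the harmonic extension $\varphi\mapsto u_\varphi$ is well-defined, and since $\Delta$ is linear and the boundary condition $u_\varphi|_{\delta\Omega}=\varphi$ is linear in $\varphi$, the uniqueness clause of Lemma \ref{harmonic extension} forces $u_{\alpha\varphi+\beta\psi}=\alpha u_\varphi+\beta u_\psi$. Composing with the outward normal derivative $\frac{\partial}{\partial n}$, which is manifestly linear by its definition in \eqref{neumannboundaryoperator}, shows $\Lambda$ is linear.

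Next I would derive the central identity. Applying Green's formula \eqref{eq:green formula} with $f=u_\varphi$ and $g=u_\psi$ gives
$$\langle \Delta u_\varphi,u_\psi\rangle_\Omega=-D_\Omega(u_\varphi,u_\psi)+\Big\langle \tfrac{\partial u_\varphi}{\partial n},u_\psi\Big\rangle_{\delta\Omega}.$$
Because $u_\varphi$ is harmonic, $\Delta u_\varphi\equiv 0$ on $\Omega$, so the left-hand side vanishes; and since $u_\psi=\psi$ on $\delta\Omega$ while $\frac{\partial u_\varphi}{\partial n}=\Lambda\varphi$ by definition, the boundary pairing is exactly $\langle\Lambda\varphi,\psi\rangle_{\delta\Omega}$. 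Rearranging yields the claimed identity $\langle\Lambda\varphi,\psi\rangle_{\delta\Omega}=D_\Omega(u_\varphi,u_\psi)$.

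Finally, self-adjointness and nonnegativity are immediate consequences. The quadratic form $D_\Omega(\cdot,\cdot)$ is symmetric by its definition, so $D_\Omega(u_\varphi,u_\psi)=D_\Omega(u_\psi,u_\varphi)$, whence $\langle\Lambda\varphi,\psi\rangle_{\delta\Omega}=\langle\Lambda\psi,\varphi\rangle_{\delta\Omega}=\langle\varphi,\Lambda\psi\rangle_{\delta\Omega}$, giving self-adjointness on the finite-dimensional Hilbert space $\ell^2(\delta\Omega,m)$. Taking $\psi=\varphi$ gives $\langle\Lambda\varphi,\varphi\rangle_{\delta\Omega}=D_\Omega(u_\varphi)=\sum_{e=\{x,y\}}\mu_{xy}(u_\varphi(x)-u_\varphi(y))^2\geq 0$, so $\Lambda$ is nonnegative. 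There is no serious obstacle here: the only point requiring a little care is invoking the uniqueness part of Lemma \ref{harmonic extension} to pin down linearity of the extension, after which the symmetry and positivity of $D_\Omega$ deliver the result with no further computation.
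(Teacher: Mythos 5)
Your proposal is correct and follows essentially the same route as the paper: both rest on Green's formula applied to harmonic extensions to obtain $\langle\Lambda\varphi,\psi\rangle_{\delta\Omega}=D_\Omega(u_\varphi,u_\psi)$, with nonnegativity from the case $\psi=\varphi$. The only cosmetic differences are that you invoke the symmetry of $D_\Omega$ after one application of Green's formula where the paper applies Green's formula twice (to $\langle\Delta u_f,u_g\rangle_\Omega$ and $\langle u_f,\Delta u_g\rangle_\Omega$) to equate the two boundary pairings, and that you explicitly verify linearity via the uniqueness clause of the harmonic-extension lemma, a point the paper leaves tacit.
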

\begin{proof}
For any $f$, $g\in\ell^2(\delta\Omega)$, by Green's formula, we have
$$0=\langle \Delta u_f,u_g\rangle_\Omega=-D_{\Omega}(u_f,u_g)+\langle \frac{\partial f}{\partial n},g\rangle_{\delta\Omega},$$
$$0=\langle  u_f,\Delta u_g\rangle_\Omega=-D_{\Omega}(u_f,u_g)+\langle f,\frac{\partial g}{\partial n}\rangle_{\delta\Omega}.$$
Hence
$$\langle\Lambda f,g\rangle=\langle f,\Lambda g\rangle.$$
In particular,
$$0=\langle \Delta u_f,u_f\rangle_\Omega=-D_{\Omega}(u_f,u_f)+\langle \frac{\partial f}{\partial n},f\rangle_{\delta\Omega},$$
then
$$\langle\Lambda f,f\rangle_{\delta\Omega}=D_\Omega(u_f,u_f)\geq 0.$$
So we complete the proof.
\end{proof}

For any $y\in \delta\Omega,$ let $\delta_{y}(z)$ denote the delta function at $y,$ i.e. $\delta_y(y)=1$ and $\delta_y(z)=0$ for any $z\in\delta\Omega, z\neq y.$  We denote by $P(\cdot,y)$ the solution of equation \eqref{eq:eq1} with Dirichlet boundary condition $\frac{1}{m_y}\delta_y.$ The family $\{P(\cdot,y)\}_{y\in\delta\Omega}$ are called Poission kernels associated to the Dirchlet boundary conditions on $\Omega.$ They have interesting probabilistic explanations using simple random walks, see e.g. \cite[p.25]{Lawler2010} and \cite[chapter 8]{LawlerLimic2010}. Using Poission kernels, one can regard the DtN operator on $\Omega$ as a Laplace operator defined on a graph with the set of vertices  $\delta\Omega$ and modified edges.
\begin{prop}\label{equimap}
The DtN operator can be written as
$$\Lambda\varphi(x)=\frac{1}{m_x}\sum_{y\in\delta\Omega}\widetilde{\mu}_{xy}(\varphi(x)-\varphi(y)),$$
where $\widetilde{\mu}_{xy}=\sum_{z\in\Omega}\mu_{xz}P(z,y)m_y$ and $\sum_{y\in\delta\Omega}\widetilde{\mu}_{xy}=m_x$.
\end{prop}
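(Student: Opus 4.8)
The plan is to compute $\Lambda\varphi(x)$ directly from its definition $\Lambda\varphi = \frac{\partial u_\varphi}{\partial n}$, and rewrite the harmonic extension $u_\varphi$ inside $\Omega$ in terms of the Poisson kernels. First I would expand the boundary data $\varphi\in\R^{\delta\Omega}$ in the delta-function basis as $\varphi = \sum_{y\in\delta\Omega}\varphi(y)\delta_y = \sum_{y\in\delta\Omega}\varphi(y)\,m_y\cdot\frac{1}{m_y}\delta_y$. By Lemma \ref{harmonic extension} the harmonic extension operator $\varphi\mapsto u_\varphi$ is linear, and $P(\cdot,y)$ is by definition the harmonic extension of $\frac{1}{m_y}\delta_y$; hence by superposition the harmonic extension of $\varphi$ restricted to the interior is
\[
u_\varphi(z) = \sum_{y\in\delta\Omega}\varphi(y)\,m_y\,P(z,y),\qquad z\in\Omega.
\]

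Next I would plug this into the definition \eqref{neumannboundaryoperator} of the normal derivative. For $x\in\delta\Omega$,
\[
\Lambda\varphi(x)=\frac{\partial u_\varphi}{\partial n}(x)=\frac{1}{m_x}\sum_{z\in\Omega:z\sim x}\mu_{xz}\bigl(u_\varphi(x)-u_\varphi(z)\bigr).
\]
Since $x\in\delta\Omega$, we have $u_\varphi(x)=\varphi(x)$ by the boundary condition, and substituting the Poisson-kernel expression for $u_\varphi(z)$ gives
\[
\Lambda\varphi(x)=\frac{1}{m_x}\sum_{z\in\Omega:z\sim x}\mu_{xz}\varphi(x)
-\frac{1}{m_x}\sum_{z\in\Omega:z\sim x}\mu_{xz}\sum_{y\in\delta\Omega}\varphi(y)\,m_y\,P(z,y).
\]
Setting $\widetilde\mu_{xy}:=\sum_{z\in\Omega}\mu_{xz}P(z,y)m_y$ (the restriction $z\sim x$ can be dropped because $\mu_{xz}=0$ otherwise), the second term becomes $\frac{1}{m_x}\sum_{y\in\delta\Omega}\widetilde\mu_{xy}\varphi(y)$.

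To finish, I need to show the first term equals $\frac{1}{m_x}\sum_{y\in\delta\Omega}\widetilde\mu_{xy}\varphi(x)$, i.e. the normalization identity $\sum_{y\in\delta\Omega}\widetilde\mu_{xy}=m_x$; combining the two terms then yields exactly $\Lambda\varphi(x)=\frac{1}{m_x}\sum_{y}\widetilde\mu_{xy}(\varphi(x)-\varphi(y))$. The main obstacle is precisely this normalization. I expect to prove it by applying the DtN operator to the constant function $\mathbf{1}$: the harmonic extension of a constant is that same constant (by uniqueness in Lemma \ref{harmonic extension}), so its normal derivative vanishes, giving $\Lambda\mathbf 1\equiv 0$. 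Feeding $\varphi\equiv 1$ into the expression just derived forces $\frac{1}{m_x}\sum_{z\sim x}\mu_{xz} = \frac{1}{m_x}\sum_{y}\widetilde\mu_{xy}$. Since for $x\in\delta\Omega$ the left side equals $\frac{1}{m_x}\cdot m_x=1$ by the definition of the measure $m$ on $\delta\Omega$, we obtain $\sum_y\widetilde\mu_{xy}=m_x$, as required. Equivalently, one could prove the normalization directly by noting $\sum_{y\in\delta\Omega}P(z,y)m_y=1$ for each $z\in\Omega$, which expresses that the constant function is the harmonic extension of the constant boundary data; this probabilistic identity (total exit-distribution mass equals one) is the conceptual heart of the statement.
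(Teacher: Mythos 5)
Your proposal is correct and follows essentially the same route as the paper: expand $\varphi$ in the delta basis, use linearity to get $u_\varphi(z)=\sum_{y\in\delta\Omega}\varphi(y)m_yP(z,y)$, substitute into the normal derivative, and obtain the normalization $\sum_{y\in\delta\Omega}\widetilde{\mu}_{xy}=m_x$ from the fact that the harmonic extension of the constant function $\mathbf{1}$ is $\mathbf{1}$. Your primary phrasing of the last step via $\Lambda\mathbf{1}\equiv 0$ is a trivial repackaging of the paper's argument, and your stated alternative --- the identity $\sum_{y\in\delta\Omega}P(z,y)m_y=1$ --- is exactly what the paper uses.
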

\begin{proof}
For any given $\varphi\in \mathbb{R}^{\delta\Omega}$, we have
$$\varphi(x)=\sum_{y\in\delta\Omega}\varphi(y)m_y\cdot\frac{\delta_y(x)}{m_y}.$$
By the linearity of equation \eqref{eq:eq1}, we have
\begin{eqnarray}\label{poissonkernel}
u_{\varphi}(x)=\sum_{y\in\delta\Omega}\varphi(y)m_yP(x,y).
\end{eqnarray}
Hence by the definition of $\Lambda$,
\begin{eqnarray*}
\Lambda\varphi(x)&=&\frac{1}{m_x}\sum_{z\in\Omega}\mu_{xz}\left(\varphi(x)-\sum_{y\in\delta\Omega}\varphi(y)
m_yP(z,y)\right)\nonumber\\
&=&\varphi(x)-\frac{1}{m_x}\sum_{y\in\delta\Omega}\left(\sum_{z\in\Omega}\mu_{xz}P(z,y)m_y\right)\varphi(y)\nonumber\\
&=&\varphi(x)-\frac{1}{m_x}\sum_{y\in\delta\Omega}\widetilde{\mu}_{xy}\varphi(y).
\end{eqnarray*}
Notice that $u_{\varphi}(x)\equiv1$, if we choose $\varphi(x)=1,$ $x\in\delta\Omega$. Hence combining with  \eqref{poissonkernel}, we have
\begin{eqnarray*}
\sum_{y\in\delta\Omega}\widetilde{\mu}_{xy}&=&\sum_{z\in\Omega}\mu_{xz}\left(\sum_{y\in\delta_e\Omega}P(z,y)m_y\right)\nonumber\\
&=&\sum_{z\in\Omega}\mu_{xz}\cdot 1=m_x.
\end{eqnarray*}
Then the proposition follows.
\end{proof}

From Proposition \ref{equimap}, the DtN operator $\Lambda$ can be written in a matrix form. We define matrices $D_{\delta\Omega},$ $A_{\delta\Omega\times\Omega},$ $P_{\Omega\times\delta\Omega}$ as
\[\aligned
(D_{\delta\Omega})_{xy}& =m_x\delta_x(y),\quad   x,y\in\delta\Omega,\\
(A_{\delta\Omega\times\Omega})_{xz} & =\mu_{xz},\quad\qquad x\in\delta\Omega, z\in \Omega, \\
(P_{\Omega\times\delta\Omega})_{zx}& =P(z,x),\;\quad x\in\delta\Omega, z\in \Omega.
\endaligned\]
Then we have
\begin{coro}\label{matrixofDTN}
The DtN operator $\Lambda$ can be written as
$$\Lambda=I-D^{-1}APD,$$ where $I$ is the identity map.
\end{coro}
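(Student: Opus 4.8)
The plan is simply to rewrite the pointwise identity from Proposition \ref{equimap} in matrix language, so the proof reduces to a direct computation of the entries of $D^{-1}APD$. Writing $D=D_{\delta\Omega}$, $A=A_{\delta\Omega\times\Omega}$ and $P=P_{\Omega\times\delta\Omega}$ for brevity, I would first record that the three factors compose to a well-defined $\delta\Omega\times\delta\Omega$ matrix: $A$ sends $\R^\Omega$ to $\R^{\delta\Omega}$, $P$ sends $\R^{\delta\Omega}$ to $\R^\Omega$, and the diagonal matrices $D^{-1},D$ act on $\R^{\delta\Omega}$, so that $D^{-1}APD$ and $I$ live on the same space $\R^{\delta\Omega}$.

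For fixed $x,y\in\delta\Omega$ I would then compute the entry step by step. By the definitions of $A$ and $P$ one has $(AP)_{xy}=\sum_{z\in\Omega}\mu_{xz}P(z,y)$. Multiplying on the right by the diagonal matrix $D$, whose $(y,y)$ entry is $m_y$, gives $(APD)_{xy}=m_y\sum_{z\in\Omega}\mu_{xz}P(z,y)=\widetilde{\mu}_{xy}$, which is precisely the modified weight appearing in Proposition \ref{equimap}. Multiplying on the left by $D^{-1}$, whose $(x,x)$ entry is $1/m_x$, yields $(D^{-1}APD)_{xy}=\frac{1}{m_x}\widetilde{\mu}_{xy}$, and hence $(I-D^{-1}APD)_{xy}=\delta_x(y)-\frac{1}{m_x}\widetilde{\mu}_{xy}$.

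It then remains to check that this matrix acts on an arbitrary $\varphi\in\R^{\delta\Omega}$ in the required way. Applying it to $\varphi$ gives $\sum_{y\in\delta\Omega}\left(\delta_x(y)-\frac{1}{m_x}\widetilde{\mu}_{xy}\right)\varphi(y)=\varphi(x)-\frac{1}{m_x}\sum_{y\in\delta\Omega}\widetilde{\mu}_{xy}\varphi(y)$, which coincides exactly with the expression for $\Lambda\varphi(x)$ derived in the proof of Proposition \ref{equimap}. Since $\varphi$ is arbitrary, this proves $\Lambda=I-D^{-1}APD$.

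The computation is elementary, so there is no genuine obstacle beyond careful bookkeeping: one must track which index set, $\Omega$ or $\delta\Omega$, labels each factor, and confirm that the identity term $I$ correctly reproduces the self-term $\varphi(x)$ from the factor $\delta_x(y)$. As a consistency check I would note that the normalization $\sum_{y\in\delta\Omega}\widetilde{\mu}_{xy}=m_x$ established in Proposition \ref{equimap} shows that each row of $D^{-1}APD$ sums to $1$, so that $\Lambda$ annihilates constant functions, exactly as a Laplace-type operator should.
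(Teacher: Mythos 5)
Your proof is correct and follows exactly the route the paper intends: the corollary is stated as an immediate consequence of Proposition~\ref{equimap}, and your entrywise computation $(D^{-1}APD)_{xy}=\frac{1}{m_x}\widetilde{\mu}_{xy}$, combined with the identity $\Lambda\varphi(x)=\varphi(x)-\frac{1}{m_x}\sum_{y\in\delta\Omega}\widetilde{\mu}_{xy}\varphi(y)$ from that proposition's proof, is precisely the bookkeeping the authors omit. The row-sum consistency check via $\sum_{y\in\delta\Omega}\widetilde{\mu}_{xy}=m_x$ is a nice extra confirmation but not needed for the argument.
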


\section{Spectrum of the DtN operator}

Given $\varphi\in\R^{\delta\Omega}$, for simplicity we denote by $\overline{\varphi}$ the null-extension of $\varphi$ to $\overline{\Omega}$, which is defined as
\[
\overline{\varphi}(x)=\left\{
       \begin{array}{ll}
        0,& x\in\Omega, \\
        \varphi(x),&x\in\delta\Omega.
               \end{array}
     \right.
\]

For any $p\in[1,\infty],$   the $\ell^p$-$\ell^p$ norm of the operator $\Lambda$ is defined as
$$\|\Lambda\|_{p,p}:=\sup_{\varphi\in\R^{\delta\Omega},\|\varphi\|_{\ell^p}=1}\|\Lambda \varphi\|_{\ell^p}.$$

\begin{prop}\label{eigenvaluebound}
The $\ell^2$-$\ell^2$ and $\ell^\infty$-$\ell^\infty$ norm of the operator $\Lambda$ are bounded, in particular
  $$\|\Lambda\|_{2,2}\leq 1,\quad \|\Lambda\|_{\infty,\infty}\leq 2.$$
\end{prop}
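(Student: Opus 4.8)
The plan is to treat the two estimates separately, since they rely on different features of $\Lambda$.

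For the $\ell^2$-$\ell^2$ bound I would exploit that $\Lambda$ is nonnegative and self-adjoint on $\ell^2(\delta\Omega,m)$, so that $\|\Lambda\|_{2,2}$ equals its largest eigenvalue, which in turn is the supremum of the Rayleigh quotient. Using the identity $\langle\Lambda\varphi,\varphi\rangle_{\delta\Omega}=D_\Omega(u_\varphi,u_\varphi)$ established in the previous proposition, this gives
\[
\|\Lambda\|_{2,2}=\sup_{\varphi\neq 0}\frac{D_\Omega(u_\varphi)}{\|\varphi\|_{\ell^2}^2},\qquad \|\varphi\|_{\ell^2}^2=\sum_{z\in\delta\Omega}\varphi(z)^2 m_z.
\]
Thus it suffices to show $D_\Omega(u_\varphi)\leq\|\varphi\|_{\ell^2}^2$ for every $\varphi\in\R^{\delta\Omega}$.

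To prove this I would use the Dirichlet principle: among all $w\in\R^{\overline\Omega}$ with $w|_{\delta\Omega}=\varphi$, the harmonic extension $u_\varphi$ minimizes $D_\Omega(w)$. This follows from Green's formula (Lemma \ref{Green's formula}): writing $w=u_\varphi+v$ with $v|_{\delta\Omega}=0$, the cross term $D_\Omega(u_\varphi,v)=-\langle\Delta u_\varphi,v\rangle_\Omega+\langle\frac{\partial u_\varphi}{\partial n},v\rangle_{\delta\Omega}$ vanishes because $\Delta u_\varphi=0$ on $\Omega$ and $v=0$ on $\delta\Omega$, so $D_\Omega(w)=D_\Omega(u_\varphi)+D_\Omega(v)\geq D_\Omega(u_\varphi)$. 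Applying this with the concrete competitor $w=\overline\varphi$, the null-extension, I would compute $D_\Omega(\overline\varphi)$ directly: every edge in $E(\Omega,\overline\Omega)$ has at least one endpoint in $\Omega$, so edges internal to $\Omega$ contribute $0$, while an edge $\{x,z\}$ with $x\in\Omega$, $z\in\delta\Omega$ contributes $\mu_{xz}\varphi(z)^2$. Summing and using $m_z=\sum_{x\in\Omega,x\sim z}\mu_{xz}$ for $z\in\delta\Omega$ yields $D_\Omega(\overline\varphi)=\sum_{z\in\delta\Omega}\varphi(z)^2 m_z=\|\varphi\|_{\ell^2}^2$. Combining with the minimality gives $D_\Omega(u_\varphi)\leq\|\varphi\|_{\ell^2}^2$, hence $\|\Lambda\|_{2,2}\leq 1$.

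For the $\ell^\infty$-$\ell^\infty$ bound I would argue pointwise from the explicit expression in Proposition \ref{equimap}. For each $x\in\delta\Omega$,
\[
|\Lambda\varphi(x)|\leq\frac{1}{m_x}\sum_{y\in\delta\Omega}\widetilde{\mu}_{xy}\bigl(|\varphi(x)|+|\varphi(y)|\bigr)\leq\frac{2\|\varphi\|_{\ell^\infty}}{m_x}\sum_{y\in\delta\Omega}\widetilde{\mu}_{xy}=2\|\varphi\|_{\ell^\infty},
\]
using the nonnegativity of the weights $\widetilde{\mu}_{xy}$ and the normalization $\sum_{y\in\delta\Omega}\widetilde{\mu}_{xy}=m_x$ from Proposition \ref{equimap}; taking the supremum over $x$ gives $\|\Lambda\|_{\infty,\infty}\leq 2$. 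Of the two parts, I expect the $\ell^2$ estimate to be the conceptual heart of the argument: the whole point is to recognize that the operator norm is the top of the Rayleigh quotient and then to select the null-extension as the test competitor, whose energy matches $\|\varphi\|_{\ell^2}^2$ exactly. The $\ell^\infty$ bound, by contrast, is a direct triangle-inequality estimate once the probabilistic/row-stochastic normalization $\sum_y\widetilde{\mu}_{xy}=m_x$ is in hand.
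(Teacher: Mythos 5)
Your proof is correct, but both halves take routes that differ from the paper's. For the $\ell^2$ bound, the paper estimates $\|\Lambda\varphi\|_{\ell^2}^2$ directly: since for $x\in\delta\Omega$ the weights $\mu_{xy}/m_x$, $y\in\Omega$, sum to $1$, Jensen/Cauchy--Schwarz gives $m_x|\Lambda\varphi(x)|^2\leq\sum_{y\in\Omega}\mu_{xy}|\varphi(x)-u_\varphi(y)|^2$, whence $\|\Lambda\varphi\|_{\ell^2}^2\leq D_\Omega(u_\varphi)$, and then, exactly as you do, $D_\Omega(u_\varphi)\leq D_\Omega(\overline{\varphi})=\|\varphi\|_{\ell^2}^2$ by the Dirichlet principle with the null-extension as competitor. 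You replace the pointwise Cauchy--Schwarz step by the spectral characterization of the norm of a nonnegative self-adjoint operator together with the identity $\langle\Lambda\varphi,\varphi\rangle_{\delta\Omega}=D_\Omega(u_\varphi)$; this is legitimate (both facts were established in the preceding proposition) and arguably slicker, though note that the paper's direct chain of inequalities \eqref{eq:eq2} is recycled later in Proposition \ref{thm:main2}, where its equality case characterizes $E_1(\Lambda)$ --- your Rayleigh-quotient identity would serve that purpose equally well, since $\Lambda\varphi=\varphi$ forces $D_\Omega(u_\varphi)=\|\varphi\|_{\ell^2}^2=D_\Omega(\overline{\varphi})$. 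For the $\ell^\infty$ bound, the paper argues from the raw definition of $\Lambda$ and implicitly invokes the maximum principle $\|u_\varphi\|_{\ell^\infty(\Omega)}\leq\|\varphi\|_{\ell^\infty}$; you instead use the row-stochastic form of Proposition \ref{equimap}, which requires $\widetilde{\mu}_{xy}\geq 0$, i.e. nonnegativity of the Poisson kernels $P(\cdot,y)$. That positivity is not proved in the paper either, and it is the same maximum principle in disguise (the harmonic extension of nonnegative boundary data is nonnegative on a connected finite graph), so your argument sits at the same level of rigor as the paper's; a one-line remark justifying $P\geq 0$ would make your version fully self-contained.
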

\begin{proof}
Let $\varphi\in\R^{\delta\Omega}.$  According to H\"older's inequality,
\begin{eqnarray}\label{eq:eq2}
\|\Lambda\varphi\|_{\ell^2(\delta\Omega,m)}^2&=&\sum_{x\in\delta\Omega}m_x\left|\sum_{y\in\Omega}
\frac{\mu_{xy}}{m_x}(\varphi(x)-u_{\varphi}(y))\right|^2\nonumber\\
&\leq&\sum_{x\in\delta\Omega}\sum_{y\in\Omega}\mu_{xy}|\varphi(x)-u_{\varphi}(y)|^2\leq D_{\Omega}(u_{\varphi})\nonumber\\
&\leq&D_{\Omega}(\overline{\varphi})=\sum_{x\in\delta\Omega,y\in\Omega}\mu_{xy}\varphi^2(x)\\
&=&\|\varphi\|_{\ell^2(\delta\Omega,m)}^2,\nonumber\end{eqnarray} where \eqref{eq:eq2} follows from the fact that harmonic functions minimize the Dirichlet energy in the class of functions with the same boundary condition. This proves the  $\ell^2$-$\ell^2$ norm bound.

For the  $\ell^\infty$-$\ell^\infty$ norm bound, we have
\begin{eqnarray*}
\parallel\Lambda\varphi\parallel_{\infty}&=&\sup_{x\in\delta\Omega}\left|\frac{1}{m_x}\sum_{y\in\Omega}
\mu_{xy}(\varphi(x)-u_{\varphi}(y))\right|\nonumber\\
&\leq&\sup_{x\in\delta\Omega}\frac{1}{m_x}\sum_{y\in\Omega}\mu_{xy}|(\varphi(x)-u_{\varphi}(y))|\nonumber\\
&\leq&2\parallel\varphi\parallel_{\infty}.
\end{eqnarray*}
\end{proof}
\begin{rem}
From Proposition \ref{eigenvaluebound}, we have $\sigma(\Lambda)\subset [0,1]$. \end{rem}
By interpolation, we have
\begin{coro}
  $\|\Lambda\|_{p,p}\leq 2^{1-\theta}$, where $\theta\in[0,1)$ and $p=\frac{2}{\theta}\in[2,\infty].$
\end{coro}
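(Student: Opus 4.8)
The plan is to apply the Riesz--Thorin interpolation theorem to the linear operator $\Lambda$, using as endpoints the two norm bounds already furnished by Proposition \ref{eigenvaluebound}, namely $\|\Lambda\|_{2,2}\le 1$ and $\|\Lambda\|_{\infty,\infty}\le 2$. All of the spaces $\ell^p(\delta\Omega,m)$ are the same finite-dimensional vector space $\R^{\delta\Omega}$ carrying different norms relative to the weighted counting measure $m$, so $(\delta\Omega,m)$ is a (trivially) $\sigma$-finite measure space and the hypotheses of Riesz--Thorin are met.

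Since Riesz--Thorin is usually phrased for complex $L^p$ spaces, I would first pass to the complexification of $\Lambda$ on $\C^{\delta\Omega}$. Both endpoint bounds survive this passage. The triangle-inequality estimate in Proposition \ref{eigenvaluebound}, combined with the maximum principle $|u_\varphi(y)|\le\|\varphi\|_\infty$ --- which holds for complex $\varphi$ because by Proposition \ref{equimap} the value $u_\varphi(y)$ is a convex combination of the boundary values $\{\varphi(z)\}_{z\in\delta\Omega}$ with nonnegative weights $m_zP(y,z)$ summing to $1$ --- again yields $\|\Lambda\varphi\|_\infty\le 2\|\varphi\|_\infty$; and the complexified operator is still Hermitian with spectrum in $[0,1]$, so its $\ell^2$-$\ell^2$ norm remains at most $1$. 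Restricting to real functions at the end costs nothing, since the $\ell^p$ norm of a real function is unchanged under complexification.

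Now I would match the interpolation parameter. Taking $(p_0,M_0)=(\infty,2)$ and $(p_1,M_1)=(2,1)$ and setting, for $\theta\in[0,1)$,
\[
\frac1p=\frac{1-\theta}{p_0}+\frac{\theta}{p_1}=\frac{\theta}{2},\qquad\text{so that}\qquad p=\frac2\theta,
\]
the theorem gives
\[
\|\Lambda\|_{p,p}\le M_0^{\,1-\theta}M_1^{\,\theta}=2^{1-\theta}\cdot 1^{\theta}=2^{1-\theta},
\]
which is exactly the asserted bound. The endpoints are consistent: $\theta=0$ gives $p=\infty$ with bound $2$, while letting $\theta\to 1$ gives $p\to 2$ with bound $\to 1$, recovering the two cases of Proposition \ref{eigenvaluebound}.

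There is no genuinely hard step; the only point demanding care is the bookkeeping of which endpoint is labelled $p_0$. Choosing instead $(p_0,p_1)=(2,\infty)$ would produce the relation $p=2/(1-\theta)$ and the bound $2^{\theta}$, a correct but differently parametrized statement, whereas the ordering $(p_0,p_1)=(\infty,2)$ is the one reproducing the stated parametrization $p=2/\theta$ with constant $2^{1-\theta}$.
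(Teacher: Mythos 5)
Your proof is correct and follows exactly the route the paper intends: the paper's entire justification is the phrase ``By interpolation,'' meaning Riesz--Thorin applied between the endpoint bounds $\|\Lambda\|_{2,2}\leq 1$ and $\|\Lambda\|_{\infty,\infty}\leq 2$ of Proposition \ref{eigenvaluebound}, with the parametrization $\frac{1}{p}=\frac{\theta}{2}$ giving $M_0^{1-\theta}M_1^{\theta}=2^{1-\theta}$ just as you compute. Your additional care with the complexification (checking that the maximum principle gives the $\ell^\infty$ endpoint for complex data and that the $\ell^2$ bound survives) fills in a detail the paper leaves implicit, but it is not a different method.
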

\begin{prop}\label{0eigenvalue}
The multiplicity of the eigenvalue 0 of $\Lambda,$ i.e. $\dim \mathrm{Ker}\Lambda,$ is equal to the number of connected components of the graph $\widetilde{\Omega}.$
\end{prop}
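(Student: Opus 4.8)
The plan is to identify $\mathrm{Ker}\,\Lambda$ with the locally constant functions on $\widetilde\Omega$ and then count them component by component. First I would record the characterization of the kernel. In the proof that $\Lambda$ is nonnegative and self-adjoint one obtains $\langle\Lambda\varphi,\varphi\rangle_{\delta\Omega}=D_\Omega(u_\varphi)$ for every $\varphi\in\R^{\delta\Omega}$. Since $\Lambda$ is nonnegative and self-adjoint, $\varphi\in\mathrm{Ker}\,\Lambda$ if and only if $\langle\Lambda\varphi,\varphi\rangle_{\delta\Omega}=0$, i.e. if and only if $D_\Omega(u_\varphi)=0$. As every weight $\mu_{xy}$ with $\{x,y\}\in E(\Omega,\overline{\Omega})$ is strictly positive, the vanishing of $D_\Omega(u_\varphi)=\sum_{\{x,y\}\in E(\Omega,\overline{\Omega})}\mu_{xy}(u_\varphi(x)-u_\varphi(y))^2$ forces $u_\varphi(x)=u_\varphi(y)$ across every edge of $\widetilde\Omega$; hence $u_\varphi$ is constant on each connected component of $\widetilde\Omega$.

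Next I would produce kernel elements from locally constant functions. Let $C_1,\dots,C_k$ be the connected components of $\widetilde\Omega$, and let $u\in\R^{\overline{\Omega}}$ be any function that is constant on each $C_i$. For $x\in\Omega$ every neighbour $y\sim x$ lies in $\overline{\Omega}$ and the edge $\{x,y\}$ belongs to $E(\Omega,\overline{\Omega})$, so $y$ lies in the same component as $x$ and $u(y)=u(x)$; thus $\Delta u(x)=0$ and $u$ is harmonic on $\Omega$. By the uniqueness in Lemma~\ref{harmonic extension}, $u$ is the harmonic extension of its boundary trace $u|_{\delta\Omega}$, and by the first paragraph this trace lies in $\mathrm{Ker}\,\Lambda$. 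Combining the two directions, $\mathrm{Ker}\,\Lambda$ is exactly the set of boundary traces $u|_{\delta\Omega}$ of functions $u\in\R^{\overline{\Omega}}$ that are constant on each component of $\widetilde\Omega$.

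It then remains to count dimensions. I would define $\Psi:\R^k\to\mathrm{Ker}\,\Lambda$ by letting $\Psi(c_1,\dots,c_k)$ be the trace on $\delta\Omega$ of the function equal to $c_i$ on $C_i$; the previous step shows $\Psi$ is well defined and surjective. For injectivity one uses that each component $C_i$ must contain a vertex of $\delta\Omega$: a component contained in $\Omega$ would carry an undetermined additive constant and so contradict the uniqueness of harmonic extensions asserted in Lemma~\ref{harmonic extension}. Consequently $\Psi(c)=0$ forces $c_i=0$ for every $i$, so $\Psi$ is an isomorphism and $\dim\mathrm{Ker}\,\Lambda=k$, the number of connected components of $\widetilde\Omega$. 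The only delicate point, and the one I would be most careful about, is precisely this boundary-meeting property of the components, which is what links the purely combinatorial count to the well-posedness of the harmonic extension.
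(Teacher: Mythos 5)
Your proposal is correct, and its engine is exactly the paper's: Green's formula gives $\langle\Lambda\varphi,\varphi\rangle_{\delta\Omega}=D_{\Omega}(u_{\varphi})$, so kernel elements are precisely those $\varphi$ whose harmonic extension is constant on each connected component of $\widetilde{\Omega}$. The difference is one of completeness rather than route: the paper's proof stops after this one direction, leaving implicit both the converse inclusion (traces of locally constant functions lie in $\mathrm{Ker}\,\Lambda$) and the dimension count, whereas you supply both, via the explicit map $\Psi:\R^{k}\to\mathrm{Ker}\,\Lambda$. In doing so you isolate the one genuinely delicate point that the paper glosses over: injectivity of $\Psi$ needs every component of $\widetilde{\Omega}$ to contain a vertex of $\delta\Omega$. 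Your justification is sound within the paper's framework: since every neighbour of $x\in\Omega$ lies in $\overline{\Omega}$ and every such edge is an edge of $\widetilde{\Omega}$, a component of $\widetilde{\Omega}$ disjoint from $\delta\Omega$ would be an entire component of $G$ contained in $\Omega$, on which the Dirichlet problem admits arbitrary constants, contradicting the uniqueness asserted in Lemma~\ref{harmonic extension}; so that uniqueness statement implicitly excludes this configuration. It is worth stressing that without this observation the proposition as literally stated would fail --- $\dim\mathrm{Ker}\,\Lambda$ would count only the components meeting $\delta\Omega$ --- so your ``delicate point'' is not pedantry but precisely what makes the count close, and it is the step the paper's two-line proof omits.
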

\begin{proof}
Let $\varphi\in\R^{\delta\Omega}$ be an eigenfunction associated to eigenvalue 0 of $\Lambda,$ i.e. $\Lambda\varphi=0.$ By Green's formula we have
$$0=\langle \Delta u_{\varphi},u_{\varphi}\rangle_\Omega=-D_{\Omega}(u_{\varphi})+\langle \Lambda\varphi,\varphi\rangle_{\delta\Omega}=-D_{\Omega}(u_{\varphi}).$$
Hence $u_{\varphi}$ is constant on each connected component of $\widetilde{\Omega}.$
\end{proof}
From now on, we always assume that the graph $\widetilde{\Omega}$ is connected and $\Lambda$ is an operator from $\ell^2(\delta\Omega,m)$ to $\ell^2(\delta\Omega,m).$ Let \[E_1(\Lambda):=\{\varphi\in\R^{\delta\Omega}|\ \Lambda\varphi=\varphi\}\]
be the space of eigenvectors associated to the eigenvalue $1$ which might be empty. Set $\delta_I\Omega:=\{x\in \Omega|\ x\sim y\ \mathrm{for\ some\ }y\in \delta\Omega\}.$ For any $\varphi\in\R^{\delta\Omega},$ we introduce a linear operator $Q:\R^{\delta\Omega}\rightarrow\R^{\delta_I\Omega},$ which is defined as
\begin{eqnarray}\label{qoperator}Q\varphi(x)=\frac{1}{m_x}\sum_{y\in\delta\Omega}\mu_{xy}\varphi(y),\quad x\in\delta_I\Omega.\end{eqnarray}

Let $\sharp\delta\Omega$ ($\sharp\delta_I\Omega$ resp.) denote the number of vertices in $\delta\Omega$ ($\delta_I\Omega$ resp.). We order the eigenvalues of the DtN operator $\Lambda$ in the nondecreasing way:
$$0=\lambda_0(\Omega)<\lambda_1(\Omega)\leq\cdots\leq \lambda_{N-1}(\Omega)\leq 1,$$ where $N=\sharp\delta\Omega.$

We obtain some characterisations of $E_1(\Lambda)$ in the following proposition.

\begin{prop}\label{thm:main2}
(1) For $\varphi\in\R^{\delta\Omega},$ $\varphi\in E_1(\Lambda)$ if and only if
$$u_\varphi=\overline{\varphi}.$$

(2) $$E_1(\Lambda)=\mathrm{Ker}Q.$$
\end{prop}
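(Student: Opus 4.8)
The plan is to prove part (1) first and then deduce part (2) by computing the Laplacian of the null-extension $\overline{\varphi}$. For the nontrivial implication in (1), suppose $\varphi\in E_1(\Lambda)$, i.e. $\Lambda\varphi=\varphi$. Pairing with $\varphi$ and using the identity $\langle\Lambda\varphi,\varphi\rangle_{\delta\Omega}=D_\Omega(u_\varphi)$ (which is Green's formula, Lemma \ref{Green's formula}, together with $\Delta u_\varphi=0$, exactly as in the proof that $\Lambda$ is self-adjoint), I obtain $D_\Omega(u_\varphi)=\|\varphi\|_{\ell^2(\delta\Omega,m)}^2$. On the other hand, the same computation already appearing in the proof of Proposition \ref{eigenvaluebound} gives $D_\Omega(\overline{\varphi})=\sum_{x\in\delta\Omega}m_x\varphi(x)^2=\|\varphi\|_{\ell^2(\delta\Omega,m)}^2$, since only edges in $E(\Omega,\overline\Omega)$ meeting $\delta\Omega$ contribute and $\overline{\varphi}$ vanishes at their $\Omega$-endpoint, while $\sum_{y\in\Omega,\,y\sim x}\mu_{xy}=m_x$ for $x\in\delta\Omega$. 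Hence $D_\Omega(u_\varphi)=D_\Omega(\overline{\varphi})$.

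To upgrade this energy equality to the pointwise identity $u_\varphi=\overline{\varphi}$, I set $w:=\overline{\varphi}-u_\varphi$, which vanishes on $\delta\Omega$ because both functions restrict to $\varphi$ there. Expanding $D_\Omega(\overline{\varphi})=D_\Omega(u_\varphi)+2D_\Omega(u_\varphi,w)+D_\Omega(w)$ and applying Green's formula to the cross term, the interior contribution vanishes since $\Delta u_\varphi=0$ on $\Omega$ and the boundary contribution vanishes since $w|_{\delta\Omega}=0$, so $D_\Omega(u_\varphi,w)=0$. Therefore $D_\Omega(w)=0$, which forces $w$ to be constant on each connected component of $\widetilde{\Omega}$; as $\widetilde{\Omega}$ is assumed connected and $w=0$ on the nonempty set $\delta\Omega$, we conclude $w\equiv0$, i.e. $u_\varphi=\overline{\varphi}$. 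The converse direction is a one-line computation: if $u_\varphi=\overline{\varphi}$, then for $z\in\delta\Omega$ one has $\Lambda\varphi(z)=\frac{1}{m_z}\sum_{x\in\Omega,\,x\sim z}\mu_{zx}(\varphi(z)-0)=\varphi(z)$, using $m_z=\sum_{x\in\Omega,\,x\sim z}\mu_{zx}$.

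For part (2), by (1) it suffices to show $u_\varphi=\overline{\varphi}\iff Q\varphi=0$. Since $\overline{\varphi}$ already carries the correct boundary data on $\delta\Omega$, the uniqueness in Lemma \ref{harmonic extension} reduces the equality $u_\varphi=\overline{\varphi}$ to harmonicity of $\overline{\varphi}$, i.e. $\Delta\overline{\varphi}(x)=0$ for every $x\in\Omega$. I then compute $\Delta\overline{\varphi}(x)$ directly: for $x\in\Omega$ we have $\overline{\varphi}(x)=0$ and only neighbors in $\delta\Omega$ contribute to $\sum_{y\sim x}\mu_{xy}\overline{\varphi}(y)$, so $\Delta\overline{\varphi}(x)=\frac{1}{m_x}\sum_{y\in\delta\Omega}\mu_{xy}\varphi(y)$. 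This equals $Q\varphi(x)$ when $x\in\delta_I\Omega$ and is automatically $0$ when $x\in\Omega\setminus\delta_I\Omega$, since then no neighbor of $x$ lies in $\delta\Omega$. Hence $\Delta\overline{\varphi}\equiv0$ on $\Omega$ if and only if $Q\varphi\equiv0$ on $\delta_I\Omega$, which is precisely $\varphi\in\mathrm{Ker}\,Q$; this completes the proof.

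The step I expect to be the main obstacle is the equality case in (1), namely converting $D_\Omega(u_\varphi)=D_\Omega(\overline{\varphi})$ into $u_\varphi=\overline{\varphi}$. The cleanest route is the $w$-decomposition above rather than an abstract strict-convexity argument, and it is exactly here that the standing assumption that $\widetilde{\Omega}$ is connected, together with $\delta\Omega\neq\emptyset$, is essential.
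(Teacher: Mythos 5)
Your proof is correct, and while its overall skeleton (an energy identity for part (1), then computing $\Delta\overline{\varphi}$ for part (2)) matches the paper's, you handle the crucial equality case by a genuinely different argument. The paper derives $D_\Omega(u_\varphi)=D_\Omega(\overline{\varphi})$ by observing that $\Lambda\varphi=\varphi$ forces equality throughout the chain of inequalities \eqref{eq:eq2} from the proof of Proposition \ref{eigenvaluebound}, and then concludes $u_\varphi=\overline{\varphi}$ by appealing to the variational facts that the harmonic extension minimizes the Dirichlet energy among functions with the given boundary data and that a minimizer is harmonic, combined with the uniqueness in Lemma \ref{harmonic extension}. You instead obtain $D_\Omega(u_\varphi)=\|\varphi\|_{\ell^2(\delta\Omega,m)}^2=D_\Omega(\overline{\varphi})$ directly from Green's formula and the elementary computation of $D_\Omega(\overline{\varphi})$, and you upgrade the energy equality to the pointwise identity via the explicit orthogonal decomposition $D_\Omega(\overline{\varphi})=D_\Omega(u_\varphi)+D_\Omega(w)$ with $w=\overline{\varphi}-u_\varphi$, the cross term $D_\Omega(u_\varphi,w)$ vanishing by Lemma \ref{Green's formula} since $\Delta u_\varphi=0$ on $\Omega$ and $w|_{\delta\Omega}=0$; then $D_\Omega(w)=0$ together with the standing connectedness assumption on $\widetilde{\Omega}$ and $\delta\Omega\neq\emptyset$ yields $w\equiv0$. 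What your route buys is self-containedness: it effectively reproves, in one first-variation computation, the minimality and uniqueness facts the paper cites without proof, whereas the paper's version is shorter given its earlier norm estimates. In part (2) your argument is essentially the paper's, except you make explicit a point the paper leaves implicit in its converse direction, namely that $\Delta\overline{\varphi}(x)=0$ holds automatically for $x\in\Omega\setminus\delta_I\Omega$ (no neighbor of such $x$ lies in $\delta\Omega$), so that vanishing of $Q\varphi$ on $\delta_I\Omega$ is genuinely equivalent to harmonicity of $\overline{\varphi}$ on all of $\Omega$, which by uniqueness of the harmonic extension closes the equivalence.
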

\begin{proof}(1)
For the "if " part,
\[\Lambda\varphi(x)=\frac{1}{m_x}\sum_{y\in\Omega}\mu_{xy}(\overline{\varphi}(x)-\overline{\varphi}(y))=\varphi(x).\]

For the "only if " part, if $\varphi\in E_1(\Lambda),$ then all the inequalities in \eqref{eq:eq2} are equalities. Hence $D_{\Omega}(u_\varphi)=D_{\Omega}(\overline{\varphi}).$ This implies that $\overline{\varphi}$ attains the minimal Dirichlet energy with fixed boundary condition, i.e. $\overline{\varphi}$ is harmonic. By the uniqueness of harmonic functions with fixed boundary condition, we have $\overline{\varphi}=u_{\varphi}.$

(2) If $\varphi\in E_1(\Lambda)$, then $u_{\varphi}=\overline{\varphi}.$ For any $x\in\delta_I\Omega$,
\begin{eqnarray*}
Q\varphi(x)&=&\frac{1}{m_x}\sum_{y\in\delta\Omega}\mu_{xy}\varphi(y)\nonumber\\
&=&\frac{1}{m_x}\left(\sum_{y\in\delta\Omega}\mu_{xy}(\overline{\varphi}(y)-\overline{\varphi}(x))
+\sum_{y\in\Omega}\mu_{xy}(\overline{\varphi}(y)-\overline{\varphi}(x))\right)\nonumber\\
&=&\Delta u_{\varphi}(x)=0.
\end{eqnarray*}
Hence $E_1(\Lambda)\subset \mathrm{Ker}Q$. On the other hand, If $\varphi\in \mathrm{Ker}Q$, then for any $x\in\delta_I\Omega$,
$$\Delta\overline{\varphi}(x)=\frac{1}{m_x}\sum_{y\in\delta\Omega}\mu_{xy}(\overline{\varphi}(y)-
\overline{\varphi}(x))=\frac{1}{m_x}\sum_{y\in\delta\Omega}\mu_{xy}\varphi(y)=0.$$
Hence $u_{\varphi}=\overline{\varphi}$, i.e. $\mathrm{Ker}Q\subset E_1(\Lambda)$, and the proof is completed.

\end{proof}

\begin{rem}
By Proposition \ref{thm:main2}, the problem of determining $E_1(\Lambda)$ can be reduced to the properties of the combinatorial structure of $\delta_I\Omega\cup\delta\Omega$, which is independent of the inner structure $\Omega\setminus\delta_I\Omega$.
\end{rem}

From Proposition \ref{thm:main2}, we obtain a sufficient condition for $E_1(\Lambda)$ to be nonempty as follows.
\begin{coro}\label{eigenvalue1}
  $$\dim E_1(\Lambda)\geq \sharp\delta\Omega-\sharp\delta_I\Omega.$$
In particular, $E_1(\Lambda)\neq\emptyset$ if $\sharp\delta\Omega>\sharp\delta_I\Omega.$
\end{coro}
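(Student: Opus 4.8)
The plan is to derive this dimension bound directly from part (2) of Proposition \ref{thm:main2}, which identifies $E_1(\Lambda)$ with $\mathrm{Ker}\,Q$, where $Q:\R^{\delta\Omega}\to\R^{\delta_I\Omega}$ is the linear operator defined in \eqref{qoperator}. Since $E_1(\Lambda)=\mathrm{Ker}\,Q$, the dimension of $E_1(\Lambda)$ is exactly the dimension of the kernel of $Q$, and the entire statement becomes a routine application of the rank-nullity theorem to $Q$.

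First I would recall the rank-nullity identity for the linear map $Q$ acting between the finite-dimensional spaces $\R^{\delta\Omega}$ and $\R^{\delta_I\Omega}$, namely
\[
\dim\mathrm{Ker}\,Q=\dim\R^{\delta\Omega}-\mathrm{rank}\,Q=\sharp\delta\Omega-\mathrm{rank}\,Q.
\]
Next I would observe that, since $Q$ maps into $\R^{\delta_I\Omega}$, its image is a subspace of $\R^{\delta_I\Omega}$, so $\mathrm{rank}\,Q\leq\dim\R^{\delta_I\Omega}=\sharp\delta_I\Omega$. Combining these two facts immediately gives
\[
\dim E_1(\Lambda)=\dim\mathrm{Ker}\,Q\geq\sharp\delta\Omega-\sharp\delta_I\Omega,
\]
which is the desired inequality. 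The final claim, that $E_1(\Lambda)\neq\emptyset$ whenever $\sharp\delta\Omega>\sharp\delta_I\Omega$, then follows because the displayed lower bound is strictly positive under that hypothesis, forcing $\mathrm{Ker}\,Q$ to contain a nonzero vector.

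I do not expect any genuine obstacle here: the content of the corollary is entirely contained in the identification $E_1(\Lambda)=\mathrm{Ker}\,Q$ established in Proposition \ref{thm:main2}(2), and everything else is elementary linear algebra over the finite-dimensional space $\R^{\delta\Omega}$. The only point worth stating carefully is the trivial bound $\mathrm{rank}\,Q\leq\sharp\delta_I\Omega$, which holds simply because the codomain of $Q$ has dimension $\sharp\delta_I\Omega$; no structural information about the weights $\mu_{xy}$ or the combinatorics of the graph is needed for the inequality itself. If one wished to push toward an equality rather than an inequality, the difficulty would be in determining $\mathrm{rank}\,Q$ exactly, which would require understanding when the rows of $Q$ (indexed by $\delta_I\Omega$) are linearly independent, but that refinement lies beyond what the corollary asserts.
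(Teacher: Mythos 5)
Your proposal is correct and matches the paper's own proof, which likewise invokes Proposition \ref{thm:main2}(2) to identify $E_1(\Lambda)$ with $\mathrm{Ker}\,Q$ and then applies rank-nullity, $\dim\mathrm{Ker}\,Q+\dim\mathrm{Im}\,Q=\sharp\delta\Omega$, together with the trivial bound $\mathrm{rank}\,Q\leq\sharp\delta_I\Omega$. You have merely written out the elementary linear-algebra steps the paper leaves implicit.
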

\begin{proof}
It directly follows from the fact that $\dim{\mathrm{Ker}Q}+\dim{\mathrm{Im}Q}=\sharp\delta\Omega$.
\end{proof}

\section{escobar-type Cheeger estimate}
\begin{prop}\label{jammesescobarcompare}
Let $G$ be a finite graph and $\Omega\subset V,$ then we have
$$h_J(\widetilde{\Omega})\leq h_E(\widetilde{\Omega}).$$
\end{prop}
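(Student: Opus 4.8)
The plan is to prove the contrapositive-free statement $h_E(\widetilde\Omega)\ge h_J(\widetilde\Omega)$ directly, by showing that every set $A\subset\overline\Omega$ that is admissible in the infimum defining $h_E(\widetilde\Omega)$ has an $h_E$-ratio bounded below by $h_J(\widetilde\Omega)$; taking the infimum over all such $A$ then gives the claim. The crucial observation is that both constants are infima of the \emph{same} quantity $\mu(\partial_\Omega A)/m(A\cap\delta\Omega)$, differing only in the admissible family: $h_E$ imposes $m(A\cap\delta\Omega)\le\frac12 m(\delta\Omega)$ while $h_J$ imposes $m(A)\le\frac12 m(\overline\Omega)$. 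So the entire work is a comparison of these two constraints. Fix therefore an $h_E$-admissible $A$, i.e. $m(A\cap\delta\Omega)\le\frac12 m(\delta\Omega)$. First I would dispose of the easy case: if moreover $m(A)\le\frac12 m(\overline\Omega)$, then $A$ is itself admissible for $h_J(\widetilde\Omega)$, and the definition of $h_J$ as an infimum immediately gives $\mu(\partial_\Omega A)/m(A\cap\delta\Omega)\ge h_J(\widetilde\Omega)$.

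The main case is $m(A)>\frac12 m(\overline\Omega)$, where I pass to the relative complement $A^\vee=\overline\Omega\setminus A$. Then $m(A^\vee)=m(\overline\Omega)-m(A)<\frac12 m(\overline\Omega)$, so $A^\vee$ is admissible for $h_J(\widetilde\Omega)$. Two facts finish the argument. First, the cut is symmetric: the edges of $\widetilde\Omega$ joining $A$ to $A^\vee$ are exactly those joining $A^\vee$ to $A$, hence $\partial_\Omega A^\vee=\partial_\Omega A$ and the numerators coincide. Second, since $A^\vee\cap\delta\Omega=\delta\Omega\setminus(A\cap\delta\Omega)$ and $A$ is $h_E$-admissible, the denominator cannot shrink: $m(A^\vee\cap\delta\Omega)=m(\delta\Omega)-m(A\cap\delta\Omega)\ge m(A\cap\delta\Omega)$. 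Combining these,
$$h_J(\widetilde\Omega)\le\frac{\mu(\partial_\Omega A^\vee)}{m(A^\vee\cap\delta\Omega)}=\frac{\mu(\partial_\Omega A)}{m(A^\vee\cap\delta\Omega)}\le\frac{\mu(\partial_\Omega A)}{m(A\cap\delta\Omega)}.$$

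In both cases the $h_E$-ratio of $A$ dominates $h_J(\widetilde\Omega)$, and taking the infimum over all $h_E$-admissible $A$ yields $h_E(\widetilde\Omega)\ge h_J(\widetilde\Omega)$. I expect no serious obstacle: the only point needing care is the complement trick in the main case, namely recognizing that replacing $A$ by $A^\vee$ leaves the boundary cut $\mu(\partial_\Omega A)$ unchanged, while the $h_E$-constraint forces the boundary mass $m(A\cap\delta\Omega)$ into the minority half of $\delta\Omega$, so that passing to the complement can only enlarge the denominator and hence decrease the ratio.
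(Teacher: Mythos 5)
Your proof is correct and follows essentially the same route as the paper's: split on whether $m(A)\le\frac12 m(\overline{\Omega})$, and in the opposite case pass to the complement $A^{\vee}$, using $\partial_{\Omega}A^{\vee}=\partial_{\Omega}A$ together with $m(A^{\vee}\cap\delta\Omega)\ge m(A\cap\delta\Omega)$ forced by the $h_E$-constraint. The only (harmless, slightly cleaner) difference is that you argue for an arbitrary $h_E$-admissible set and take the infimum at the end, while the paper works directly with a minimizing set $A$.
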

\begin{proof}
Choose $A\subset\overline{\Omega}$ that achieves $h_E(\widetilde{\Omega})$, i.e.
\[m(A\cap\delta\Omega)\leq\frac{m(\delta\Omega)}{2}\quad \text{and}\quad h_E(\widetilde{\Omega})=\frac{\mu(\partial_{\Omega}A)}{m(A\cap\delta\Omega)}.\]

If $m(A)\leq\frac{m(\overline{\Omega})}{2}$, then $h_J(\widetilde{\Omega})\leq h_E(\widetilde{\Omega}).$
If $m(A)\geq\frac{m(\overline{\Omega})}{2}$, i.e. $m(A^c)\leq\frac{m(\overline{\Omega})}{2}$, then
$$\frac{\mu(\partial_{\Omega}A^c)}{m(A^c\cap\delta\Omega)}=\frac{\mu(\partial_{\Omega}A)}
{m(A^c\cap\delta\Omega)}\leq\frac{\mu(\partial_{\Omega}A)}{m(A\cap\delta\Omega)}.$$
Hence in both cases we have $h_J(\widetilde{\Omega})\leq h_E(\widetilde{\Omega}).$
\end{proof}

From the following example, we know that the equality in the above proposition can be achieved.

\begin{example}\label{exampleforjammesescobarcompare}
Consider the path graph $P_6$ as shown in Figure 1 with unit edge weights, $\Omega=\{v_2,v_3,v_4,v_5\}$ and $\delta\Omega:=\{v_1,v_6\}$. By computation, we have $h_E(\widetilde{\Omega})=h_J(\widetilde{\Omega})=1.$
\end{example}

\begin{figure}[!h]
\includegraphics[height=4cm,width=9cm]{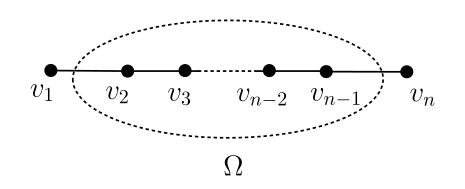}
\caption{}\label{figure1}
\end{figure}

For convenience, we need the following notion.
\begin{defi}
For any $f\in\mathbb{R}^{\delta\Omega}$, we call $k\in\R$ the $L^1$-mean value of $f$ over $\delta\Omega$ if $k$ satisfies
$$m(\{x\in\delta\Omega|f(x)\geq k\})\geq\frac{1}{2}m(\delta\Omega)$$
and
$$m(\{x\in\delta\Omega|f(x)\leq k\})\geq\frac{1}{2}m(\delta\Omega).$$
\end{defi}

\begin{rem}
The $L^1$-mean value may not be unique in general. For simplicity, we denote by $\overline{f}$ the $L^1$-mean value of $f\in\R^{\delta\Omega}$.
\end{rem}

\begin{lemma}\label{eqsobolev}
\begin{displaymath}
\argmin_{k\in\R}\sum_{x\in\delta\Omega}m_x|f(x)-k|=\overline{f},
\end{displaymath}
where $\argmin$ denotes the value $k$ at which $\sum_{x\in\delta\Omega}m_x|f(x)-k|$ attains the minimum.
\end{lemma}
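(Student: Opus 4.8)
The plan is to exploit the convexity of the one–variable function
$$g(k):=\sum_{x\in\delta\Omega}m_x|f(x)-k|,\qquad k\in\R,$$
and to identify its minimizers through a first–order optimality condition. Each summand $k\mapsto m_x|f(x)-k|$ is convex and piecewise linear with a single corner at $k=f(x)$, so $g$ is convex and piecewise linear; consequently $g$ attains its global minimum on a nonempty closed interval, and a point $k^*$ is a minimizer if and only if the left derivative of $g$ at $k^*$ is nonpositive and the right derivative is nonnegative.

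First I would compute the one–sided derivatives explicitly. Differentiating termwise gives
$$g'_+(k)=m(\{x\in\delta\Omega:f(x)\le k\})-m(\{x\in\delta\Omega:f(x)>k\}),$$
$$g'_-(k)=m(\{x\in\delta\Omega:f(x)<k\})-m(\{x\in\delta\Omega:f(x)\ge k\}),$$
the only delicate point being the vertices with $f(x)=k$: they contribute $+m_x$ to the right derivative and $-m_x$ to the left derivative, reflecting the corner of $|f(x)-k|$ at $k=f(x)$. Then I would match these with the definition of the $L^1$-mean value. Using $m(\{f\le k\})+m(\{f>k\})=m(\delta\Omega)$, the condition $g'_+(k^*)\ge 0$ is equivalent to $m(\{f\le k^*\})\ge\tfrac12 m(\delta\Omega)$, and likewise $g'_-(k^*)\le 0$ is equivalent to $m(\{f\ge k^*\})\ge\tfrac12 m(\delta\Omega)$. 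These are precisely the two inequalities defining $\overline{f}$, so the set of minimizers of $g$ coincides with the set of $L^1$-mean values of $f$, which is the assertion.

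There is no serious obstacle here; the chief care is the bookkeeping at the ties $f(x)=k$, together with the observation that both $\argmin_{k}g(k)$ and the $L^1$-mean value are a priori intervals rather than single points, matching the non-uniqueness noted in the remark. If one prefers a calculus–free argument, I would instead fix $k^*=\overline{f}$ and, for any $k'>k^*$, write $g(k')-g(k^*)=\sum_x m_x\bigl(|f(x)-k'|-|f(x)-k^*|\bigr)$, bound each term below by $-(k'-k^*)m_x$ via the reverse triangle inequality while noting that the terms with $f(x)\le k^*$ contribute exactly $+(k'-k^*)m_x$, and conclude
$$g(k')-g(k^*)\ge (k'-k^*)\bigl[m(\{f\le k^*\})-m(\{f>k^*\})\bigr]\ge 0,$$
with the symmetric estimate handling $k'<k^*$. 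This shows directly that every $L^1$-mean value minimizes $g$.
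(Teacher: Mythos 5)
Your proposal is correct, and it is worth noting that the paper does not actually prove this lemma at all: Remark \ref{equivalence}(a) simply refers the reader to \cite{ChangShaoZhang2015}, so your argument supplies a self-contained proof where the paper delegates to a citation. Your subgradient argument is the standard ``weighted median'' characterization and the bookkeeping checks out: each term $k\mapsto m_x|f(x)-k|$ contributes $+m_x$ to $g'_+$ and $-m_x$ to $g'_-$ at a tie $f(x)=k$, so $g'_+(k^*)\ge 0$ and $g'_-(k^*)\le 0$ translate, via $m(\{f\le k^*\})+m(\{f>k^*\})=m(\delta\Omega)$ and its analogue with strict/non-strict inequalities swapped, into exactly the two inequalities $m(\{f\le k^*\})\ge\tfrac12 m(\delta\Omega)$ and $m(\{f\ge k^*\})\ge\tfrac12 m(\delta\Omega)$ defining the $L^1$-mean value. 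Since for a convex function $k^*$ minimizes if and only if $g'_-(k^*)\le 0\le g'_+(k^*)$, this gives the full set equality between minimizers and $L^1$-mean values, which is the right reading of the lemma given the non-uniqueness flagged in the paper's remark --- and you correctly handle the fact that both sides are a priori intervals. One small caution about your alternative ``calculus-free'' argument: as you yourself observe, the estimate $g(k')-g(k^*)\ge (k'-k^*)\bigl[m(\{f\le k^*\})-m(\{f>k^*\})\bigr]$ together with its mirror image only proves that every $L^1$-mean value is a minimizer, i.e.\ one inclusion; to conclude the stated equality you still need the converse, which your first argument provides (or which one can get by running the direct estimate with strict inequalities when $k^*$ fails one of the two defining conditions). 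So the subgradient version should be regarded as the proof proper, with the direct estimate as a supplement.
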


\begin{rem}\label{equivalence}
(a) For the proof of Lemma \ref{eqsobolev}, one refers to e.g. \cite{ChangShaoZhang2015}.

(b) From Lemma \ref{eqsobolev}, Theorem \ref{sobolevconstant} is equivalent to
\begin{eqnarray}\label{sobolevtypecharactrisation}
h_E(\widetilde{\Omega})=\inf_{f\in\R^{\overline\Omega}}\frac{\sum_{e=\{x,y\}\in E(\Omega,\overline\Omega)}\mu_{xy}|f(x)-f(y)|}{\sum_{x\in\delta\Omega}m_x|f(x)-\overline{f}|}.\end{eqnarray}
\end{rem}

We will need the following discrete version of Co-area formula. For discrete Co-area formula, see e.g. \cite[Lemma 3.3]{Grigor'yan2009}.
\begin{lemma}\label{discretecoareaformula}
For any $f\in\R^{\overline{\Omega}}$, 
we have
$$\sum_{e=\{x,y\}\in E(\Omega,\overline\Omega)}\mu_{xy}|f(x)-f(y)|=\int_{-\infty}^{\infty}\sum_{e=\{x,y\}\in E(\Omega,\overline\Omega),f(x)<\sigma\leq f(y)}\mu_{xy}d\sigma.$$
\end{lemma}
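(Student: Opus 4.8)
The plan is to prove the identity one edge at a time and then sum, using the fact that both sides decompose as a sum over the finitely many edges in $E(\Omega,\overline\Omega)$. Since the graph is finite, $f$ takes only finitely many values, so the inner sum on the right-hand side, regarded as a function of $\sigma$, is a step function that is piecewise constant with jumps only at the levels $\{f(x):x\in\overline\Omega\}$; hence the integral is in reality a finite sum of lengths of bounded intervals and no measure-theoretic subtlety arises. First I would interchange the integral with the finite sum over edges, reducing the claim to the single-edge identity
\begin{equation*}
\mu_{xy}|f(x)-f(y)| = \int_{-\infty}^{\infty} \mu_{xy}\,\mathbf{1}\!\left[f(x)<\sigma\le f(y)\right]\,d\sigma,
\end{equation*}
read with the convention that each unoriented edge $\{x,y\}$ is counted in whichever orientation makes the strict-then-weak inequality $f(x)<\sigma\le f(y)$ hold.

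For a fixed edge $e=\{x,y\}$, the key observation is that for any level $\sigma$ at most one of the two orderings of the endpoints can satisfy $f(x)<\sigma\le f(y)$, so there is no double counting. Writing $a=\min(f(x),f(y))$ and $b=\max(f(x),f(y))$, the indicator $\mathbf{1}[a<\sigma\le b]$ is supported on the half-open interval $(a,b]$, whose length is $b-a=|f(x)-f(y)|$. Multiplying by the weight $\mu_{xy}$ and integrating gives exactly $\mu_{xy}|f(x)-f(y)|$, which is the single-edge identity. The degenerate case $f(x)=f(y)$ is automatic, since then $(a,b]$ is empty and both sides vanish.

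Summing the single-edge identity over all $e\in E(\Omega,\overline\Omega)$ and pulling the finite sum back inside the integral (legitimate by linearity of the integral over a finite index set) yields the claimed co-area formula.

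The only point requiring care — and hence the main, admittedly minor, obstacle — is pinning down the orientation convention in the summation $\sum_{e=\{x,y\},\,f(x)<\sigma\le f(y)}$: one must verify that under this reading each edge contributes its full weight $\mu_{xy}$ exactly once per unit of level separating its endpoint values, rather than being counted twice or dropped when $f$ is constant along the edge. Once this bookkeeping is settled, the proof is a direct computation.
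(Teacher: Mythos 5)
Your proof is correct and follows essentially the same route as the paper's: rewrite the summation condition as the characteristic function $\chi_{(f(x),f(y)]}(\sigma)$, interchange the integral with the finite sum over edges, and integrate each indicator to obtain the length $|f(x)-f(y)|$. Your explicit discussion of the orientation convention and the degenerate case $f(x)=f(y)$ is the same bookkeeping the paper handles implicitly via the convention that $(f(x),f(y)]$ is empty unless $f(x)<f(y)$, so nothing essentially new or different is involved.
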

\begin{proof}
For any interval $(a,b]$, we denote by $\chi_{(a,b]}$ the characteristic function on $(a,b]$, i.e.
\[
\chi_{(a,b]}(x)=\left\{
       \begin{array}{ll}
        0,& x\notin (a,b], \\
        1,&x\in (a,b].
               \end{array}
     \right.
\]

\begin{eqnarray*}
\int_{-\infty}^{\infty}\sum_{e=\{x,y\}\in E(\Omega,\overline\Omega),  f(x)<\sigma\leq f(y)}\mu_{xy}d\sigma\nonumber &=&\int_{-\infty}^{\infty}\sum_{e=\{x,y\}\in E(\Omega,\overline\Omega)}\mu_{xy}\chi_{(f(x),f(y)]}(\sigma)d\sigma\nonumber\\
&=&\sum_{e=\{x,y\}\in E(\Omega,\overline\Omega)}\mu_{xy}\int_{-\infty}^{\infty}\chi_{(f(x),f(y)]}(\sigma)d\sigma\nonumber\\
&=&\sum_{e=\{x,y\}\in E(\Omega,\overline\Omega)}\mu_{xy}|f(x)-f(y)|.
\end{eqnarray*}
\end{proof}

Now we are ready to prove Theorem~\ref{sobolevconstant}.
\begin{proof}[Proof of Theorem~\ref{sobolevconstant}]
From Remark \ref{equivalence}, it suffices to prove \eqref{sobolevtypecharactrisation}. Choose $A\subset\overline\Omega$ that achieves $h_E(\widetilde{\Omega})$, i.e.
$$m(A\cap\delta\Omega)\leq m(A^{\vee}\cap\delta\Omega),\quad h_E(\widetilde{\Omega})=\frac{\mu(\partial_{\Omega} A)}{m(A\cap\delta\Omega)}.$$
Set
\[
u(x)=\left\{
       \begin{array}{ll}
        1,& x\in A, \\
        0,&x\in \overline{\Omega}\setminus A.
               \end{array}
     \right.
\]
We observe that  $\overline{u}$, the $L^1$-mean value of $u$, is contained in $[0,1]$. To see this, one just need to notice that if $m(A\cap\delta\Omega)=m(A^{\vee}\cap\delta\Omega)$, then $\overline{u}\in[0,1]$; if $m(A\cap\delta\Omega)<m(A^{\vee}\cap\delta\Omega)$, then $\overline{u}=0$.

For any $t\in[0,1]$,
\begin{eqnarray*}
\sum_{x\in\delta\Omega}m_x|u(x)-t|&=&\sum_{x\in A\cap\delta\Omega}m_x(1-t)+\sum_{x\in A^{\vee}\cap\delta\Omega}m_xt\nonumber\\
&=& m(A\cap\delta\Omega)+t\cdot\left(m(A^{\vee}\cap\delta\Omega)-m(A\cap\delta\Omega)\right)\nonumber\\
&\geq& m(A\cap\delta\Omega).
\end{eqnarray*}
Hence
\[
\frac{\sum_{e=\{x,y\}\in E(\Omega,\overline\Omega)}\mu_{xy}|u(x)-u(y)|}{\sum_{x\in\delta\Omega}m_x|u(x)-\overline{u}|} \leq \frac{\mu(\partial A\cap E(\Omega,\overline{\Omega}))}{m(A\cap\delta\Omega)}=h_E(\widetilde{\Omega}).
\]
Then it follows that
$$h_E(\widetilde{\Omega})\geq\inf_{f\in\R^{\overline\Omega}}\frac{\sum_{e=\{x,y\}\in E(\Omega,\overline\Omega)}\mu_{xy}|f(x)-f(y)|}{\sum_{x\in\delta\Omega}m_x|f(x)-\overline{f}|}.$$
Now we prove the opposite direction. For any nonconstant function $f\in\R^{\overline\Omega}$, choose a constant $c$ such that
$$m(\{f<c\}\cap\delta\Omega)\leq m(\{f\geq c\}\cap\delta\Omega),$$
$$m(\{f\leq c\}\cap\delta\Omega)\geq m(\{f> c\}\cap\delta\Omega).$$
Set $g:=f-c$, then we have
$$m(\{g<\sigma\}\cap\delta\Omega)\leq m(\{g\geq \sigma\}\cap\delta\Omega), \quad for\quad\sigma\leq0$$
and
$$m(\{g<\sigma\}\cap\delta\Omega)\geq m(\{g\geq \sigma\}\cap\delta\Omega), \quad for\quad\sigma>0.$$
For any $\sigma\in\R$, set
$$G(\sigma):=\sum_{e=\{x,y\}\in E(\Omega,\overline\Omega),g(x)<\sigma\leq g(y)}\mu_{xy}.$$
Then by Lemma \ref{discretecoareaformula}, we have
$$\sum_{e=\{x,y\}\in E(\Omega,\overline\Omega)}\mu_{xy}|f(x)-f(y)|=\int_{-\infty}^{\infty}G(\sigma)d\sigma.$$
Set $A:=\{g<\sigma\}$ for $\sigma\leq0$ and $A:=\{g\geq\sigma\}$ for $\sigma>0$. In either case, we have $m(A\cap\delta\Omega)\leq m(A^{\vee}\cap\delta\Omega)$ and
\[
G(\sigma)\geq h_E(\widetilde{\Omega})\cdot m(A\cap\delta\Omega)=h_E(\widetilde{\Omega})\cdot\left\{
       \begin{array}{ll}
        m(\{g<\sigma\}\cap\Omega),& \text{\ for}\quad \sigma\leq0, \\
        m(\{g\geq\sigma\}\cap\Omega),& \text{\ for}\quad \sigma>0
               \end{array}
     \right.
\]
by the definition of $h_E(\widetilde{\Omega})$. Hence
\begin{eqnarray*}
&&\sum_{e=\{x,y\}\in E(\Omega,\overline\Omega)}\mu_{xy}|f(x)-f(y)|=\int_{-\infty}^{0}G(\sigma)d\sigma+\int_{0}^{\infty}G(\sigma)d\sigma\nonumber\\
&\geq&h_E(\widetilde{\Omega})\left(\int_{-\infty}^{0}m(\{g<\sigma\}\cap\Omega)d\sigma+\int_{0}^{\infty} m(\{g\geq\sigma\}\cap\Omega)d\sigma\right)\nonumber\\
&=&h_E(\widetilde{\Omega})\left(\int_{-\infty}^{0}m(\{g<\sigma\}\cap\Omega)d\sigma+\int_{0}^{\infty} m(\{g\geq\sigma\}\cap\Omega)d\sigma\right)\nonumber\\
&=&h_E(\widetilde{\Omega})\left(\int_{-\infty}^{0}\sum_{x\in\delta\Omega}\chi_{(g(x),0]}m_xd\sigma+\int_{0}^{\infty}\sum_{x\in\delta\Omega}\chi_{(0,g(x)]}m_x d\sigma\right)\nonumber\\
&=&h_E(\widetilde{\Omega})\sum_{x\in\delta\Omega}|f(x)-c|m_x.
\end{eqnarray*}
Then the other direction follows and we complete the proof of the theorem.
\end{proof}

We obtain the following upper bound estimate for $\lambda_1(\Omega).$
\begin{prop}\label{upperbound}
Let $G$ be a finite graph and $\Omega\subset V,$ then we have
$$\lambda_1(\Omega)\leq 2h_E(\widetilde{\Omega}).$$
\end{prop}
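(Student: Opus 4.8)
The plan is to combine the variational characterization of $\lambda_1(\Omega)$ with the subset realizing $h_E(\widetilde{\Omega})$ as a test function. Since $\widetilde{\Omega}$ is connected, Proposition \ref{0eigenvalue} gives $\dim\mathrm{Ker}\,\Lambda=1$, with the constant functions spanning the kernel. As $\Lambda$ is nonnegative and self-adjoint on $\ell^2(\delta\Omega,m)$, the first nontrivial eigenvalue is the infimum of the Rayleigh quotient restricted to the orthogonal complement of the constants. Using the identity $\langle\Lambda\varphi,\varphi\rangle_{\delta\Omega}=D_\Omega(u_\varphi)$, the translation invariance $u_{\varphi-c}=u_\varphi-c$ together with $D_\Omega(u_\varphi-c)=D_\Omega(u_\varphi)$, and the fact that projecting onto the orthogonal complement of the constants amounts to subtracting the $\ell^2$-mean, I would first record
\begin{equation*}
\lambda_1(\Omega)=\inf_{\varphi\in\R^{\delta\Omega}\text{ nonconstant}}\frac{D_\Omega(u_\varphi)}{\inf_{c\in\R}\sum_{x\in\delta\Omega}m_x(\varphi(x)-c)^2}.
\end{equation*}

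Next I would choose $A\subset\overline{\Omega}$ achieving $h_E(\widetilde{\Omega})$, so that $m(A\cap\delta\Omega)\leq\frac12 m(\delta\Omega)$ and $h_E(\widetilde{\Omega})=\mu(\partial_\Omega A)/m(A\cap\delta\Omega)$, and test the quotient above with $\varphi:=\mathbf 1_A|_{\delta\Omega}$. For the numerator, since $\mathbf 1_A\in\R^{\overline{\Omega}}$ restricts to $\varphi$ on $\delta\Omega$ and harmonic extensions minimize the Dirichlet energy among functions with prescribed boundary values (as used in \eqref{eq:eq2}), one has $D_\Omega(u_\varphi)\leq D_\Omega(\mathbf 1_A)=\mu(\partial_\Omega A)$, because the only edges of $E(\Omega,\overline{\Omega})$ contributing to $D_\Omega(\mathbf 1_A)$ are those in $\partial_\Omega A$. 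For the denominator, writing $a:=m(A\cap\delta\Omega)$ and $b:=m(A^\vee\cap\delta\Omega)=m(\delta\Omega)-a$, a one-variable minimization gives $\inf_{c\in\R}\big(a(1-c)^2+bc^2\big)=\frac{ab}{a+b}$, attained at $c=a/(a+b)$.

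Assembling these two estimates yields $\lambda_1(\Omega)\leq \mu(\partial_\Omega A)\,\frac{a+b}{ab}$, and the argument is concluded by the elementary inequality $\frac{ab}{a+b}\geq\frac{a}{2}$, valid precisely because $a\leq b$ (equivalently $m(A\cap\delta\Omega)\leq\frac12 m(\delta\Omega)$); this gives $\lambda_1(\Omega)\leq 2\mu(\partial_\Omega A)/a=2h_E(\widetilde{\Omega})$. The one genuinely delicate point is the choice of extension in the numerator: one must extend $\varphi$ by $\mathbf 1_A$ over all of $\overline{\Omega}$ rather than by the null extension $\overline{\varphi}$, since the latter would only reproduce $\|\varphi\|_{\ell^2}^2$ in the numerator and yield the trivial bound $\lambda_1(\Omega)\leq 1$. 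The factor $2$ is not an artifact: it is exactly the loss incurred in the denominator when passing from $m(A\cap\delta\Omega)$ to $\frac{ab}{a+b}$, and it is controlled by the normalization $m(A\cap\delta\Omega)\leq\frac12 m(\delta\Omega)$ built into the definition of $h_E(\widetilde{\Omega})$.
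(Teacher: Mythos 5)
Your proposal is correct and takes essentially the same route as the paper: the paper tests the Rayleigh quotient (over mean-zero boundary data) with the two-valued function $\varphi$ equal to $1/m(A\cap\delta\Omega)$ on $A\cap\delta\Omega$ and $-1/m(A^{\vee}\cap\delta\Omega)$ on $A^{\vee}\cap\delta\Omega$, extended over $\overline{\Omega}$ as $\widetilde{\varphi}$ and compared with the harmonic extension exactly as you do, and this $\varphi$ is (up to scaling) precisely your $\mathbf{1}_A$ after subtracting the optimal constant $c=a/(a+b)$. Both arguments arrive at the same intermediate bound $\mu(\partial_\Omega A)\left(\frac{1}{a}+\frac{1}{b}\right)$ and finish with the same observation that $a\leq b$ forces this to be at most $2\mu(\partial_\Omega A)/a=2h_E(\widetilde{\Omega})$.
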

\begin{proof}
Choose $A\subset\overline{\Omega}$ that achieves $h_E(\widetilde{\Omega})$, i.e. $m(A\cap\delta\Omega)\leq\frac12m(\delta\Omega)$ and $\frac{\mu(\partial_{\Omega} A)}{m(A\cap\delta\Omega)}=h_E(\widetilde{\Omega})$. Set $\varphi(x) \in\mathbb{R}^{\delta\Omega}$ as
\[
\varphi(x)=\left\{
       \begin{array}{ll}
        \frac{1}{m(A\cap\delta\Omega)},& x\in A\cap\delta\Omega, \\
        -\frac{1}{m(A^{\vee}\cap\delta\Omega)},& x\in A^{\vee}\cap\delta\Omega.
               \end{array}
     \right.
\]
Similarly set $\widetilde{\varphi}(x)$ to be
\[
\widetilde{\varphi}(x)=\left\{
       \begin{array}{ll}
        \frac{1}{m(A\cap\delta\Omega)},& x\in A, \\
        -\frac{1}{m(A^{\vee}\cap\delta\Omega)},& x\in A^{\vee}.
               \end{array}
     \right.
\]
Then we have
\begin{eqnarray*}
\lambda_1(\Omega)&\leq&\frac{D_{\Omega}(u_{\varphi})}{\sum_{x\in\delta\Omega}\varphi^2(x)m_x}
\leq\frac{D_{\Omega}(\widetilde{\varphi})}{\sum_{x\in\delta\Omega}\varphi^2(x)m_x}\nonumber\\
&=&\left(\frac{1}{m(A\cap\delta\Omega)}+\frac{1}{m(A^{\vee}\cap\delta\Omega)}\right)\mu(\partial A\cap E(\Omega,\overline{\Omega}))\nonumber\\
&\leq&2\frac{\mu(\partial_{\Omega} A)}{m(A\cap\delta\Omega)}=2h_E(\widetilde{\Omega}).
\end{eqnarray*}
The second inequality follows from the fact that harmonic functions minimize the Dirichlet energy among functions with the same boundary condition.
\end{proof}

\begin{figure}[!h]
\includegraphics[height=3.3cm,width=13cm]{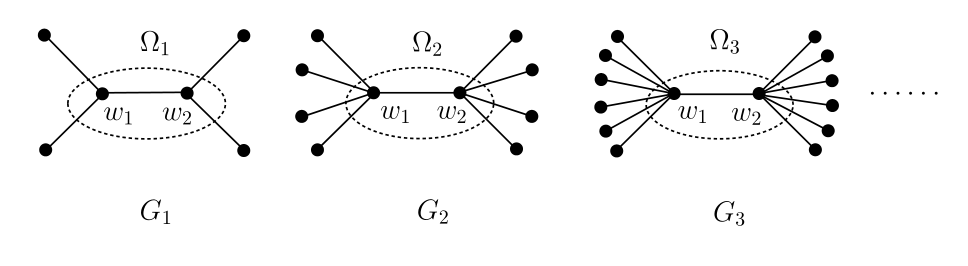}
\caption{}\label{figure2}
\end{figure}
\begin{rem}
The above upper bound estimate for $\lambda_1(\Omega)$ is sharp. From the following example, we can see that the factor 2 in the upper bound estimate can't be reduced.
\end{rem}

\begin{example}\label{exampleforupperbound}
Consider a sequence of graphs $\{G_n\}_{n=1}^{\infty}$ as shown in Figure 2 with $\Omega_n=\{w_1,w_2\}$, $\delta\Omega_n=\{v_1,v_2,\cdots,v_{4n}\}$ and unit edge weights. By calculation, $\lambda_1(G_n)=\frac{1}{n+1}$ and $h_E(G_n)=\frac{1}{2n}$. Hence
$\lambda_1(G_n)=\frac{1}{n+1}\leq\frac{1}{n}=2h_E(G_n).$
\end{example}

At the end of this section, we give the proof of Theorem~\ref{escobarlowerbound}.
\begin{proof}[Proof of Theorem~\ref{escobarlowerbound}]
Let $u$ be the first eigenvector associated to $\lambda_1(\Omega)$. For simplicity, we still denote by $u$ the harmonic extension of $u$. Set $v=(u-\overline{u})_+$ and choose $\overline{v}=0$. Then
$$m(\{v=0\}\cap\delta\Omega)\geq\frac12m(\delta\Omega).$$
Applying Theorem \ref{sobolevconstant} by choosing $f=v^2$, we have
\begin{eqnarray*}h_E(\widetilde{\Omega})\cdot\sum_{x\in\delta\Omega}v^2(x)m_x\leq\sum_{e=\{x,y\}\in E(\Omega,\overline{\Omega})}\mu_{xy}|v^2(x)-v^2(y)|.\end{eqnarray*}
For the right hand side of the above inequality,
\begin{eqnarray*}
&&\left(\sum_{e=\{x,y\}\in E(\Omega,\overline{\Omega})}\mu_{xy}\left|v^2(x)-v^2(y)\right|\right)^2\nonumber\\
&\leq&\sum_{e=\{x,y\}\in E(\Omega,\overline{\Omega})}\mu_{xy}(v(x)+v(y))^2\cdot\sum_{e=\{x,y\}\in E(\Omega,\overline{\Omega})}\mu_{xy}(v(x)-v(y))^2\nonumber\\
&\leq&2\sum_{e=\{x,y\}\in E(\Omega,\overline{\Omega})}\mu_{xy}(v^2(x)+v^2(y))\cdot\sum_{e=\{x,y\}\in E(\Omega,\overline{\Omega})}\mu_{xy}(v(x)-v(y))^2.\nonumber\\
\end{eqnarray*}
Notice that it suffices to consider the graph $\widetilde{\Omega}$, then
\begin{eqnarray*}
&&\sum_{e=\{x,y\}\in E(\Omega,\overline{\Omega})}\mu_{xy}(v^2(x)+v^2(y))\nonumber\\
&=&\frac{1}{2}\sum_{x,y\in\Omega}\mu_{xy}(v^2(x)+v^2(y))+\sum_{x\in\Omega}\sum_{y\in\delta\Omega}\mu_{xy}(v^2(x)+v^2(y))\nonumber\\
&=&\sum_{x\in\Omega}v^2(x)\left(\sum_{y\in\Omega}+\sum_{y\in\delta\Omega}\right)\mu_{xy}+\sum_{y\in\delta\Omega}v^2(y)\sum_{x\in\Omega}\mu_{xy}\nonumber\\
&=&\sum_{x\in\overline\Omega}v^2(x)m_x.
\end{eqnarray*}
Hence
\[
h_E(\widetilde{\Omega})\cdot\sum_{x\in\delta\Omega}v^2(x)m_x
\leq \left(2\sum_{x\in\overline\Omega}v^2(x)m_x\cdot \!\! \!\! \sum_{e=\{x,y\}\in E(\Omega,\overline{\Omega})}\!\! \mu_{xy}(v(x)-v(y))^2\right)^{1/2},
\]
 i.e.
\[ h_E(\widetilde{\Omega})\cdot\sum_{x\in\delta\Omega}(u-\overline{u})^2_+m_x
\leq \left(2\sum_{x\in\overline\Omega}(u-\overline{u})^2_+m_x\cdot\sum_{e=\{x,y\}\in E(\Omega,\overline{\Omega})}\mu_{xy}(u(x)-u(y))^2\right)^{1/2}.
\]
Similarly, we have
\[
h_E(\widetilde{\Omega})\cdot\sum_{x\in\delta\Omega}(u-\overline{u})^2_-m_x\nonumber
\leq \left(2\sum_{x\in\overline\Omega}(u-\overline{u})^2_-m_x\cdot\sum_{e=\{x,y\}\in E(\Omega,\overline{\Omega})}\mu_{xy}(u(x)-u(y))^2\right)^{1/2}.
\]
Hence
\begin{eqnarray}{\label{1}}
&&h_E(\widetilde{\Omega})\cdot\sum_{x\in\delta\Omega}(u-\overline{u})^2m_x\nonumber\\
&\leq&\left(2\sum_{x\in\overline\Omega}(u-\overline{u})^2m_x\cdot\sum_{e=\{x,y\}\in E(\Omega,\overline{\Omega})}\mu_{xy}(u(x)-u(y))^2\right)^{1/2}\nonumber\\
&\leq&\frac{a}{2}\sum_{x\in\overline\Omega}(u-\overline{u})^2m_x+\frac{1}{a}\sum_{e=\{x,y\}\in E(\Omega,\overline{\Omega})}\mu_{xy}(u(x)-u(y))^2\nonumber\\
&=&\frac{a}{2}\left(\sum_{x\in\Omega}+\sum_{x\in\delta\Omega}\right)(u-\overline{u})^2m_x\nonumber +\frac{1}{a}\sum_{e=\{x,y\}\in E(\Omega,\overline{\Omega})}\mu_{xy}(u(x)-u(y))^2.\\
\end{eqnarray}
For any $\varphi\in\mathds{R}^{\overline\Omega}$ we have
$$\mu_1(k)\leq\frac{D_{\Omega}(\varphi)+k\sum_{x\in\delta\Omega}\varphi^2(x)m_x}{\sum_{x\in\Omega}\varphi^2(x)m_x}.$$
Hence
$$\sum_{x\in\Omega}(u-\overline{u})^2m_x\leq\frac{D_{\Omega}(u)}{\mu_1(k)}+\frac{k}{\mu_1(k)}\sum_{x\in\delta\Omega}
(u-\overline{u})^2m_x.$$
Combining with the above inequality and \eqref{1}, we have
$$\left(h_E(\widetilde{\Omega})-\frac{a(k+\mu_1(k))}{2\mu_1(k)}\right)\frac{2a\mu_1(k)}{a^2+2\mu_1(k)}
\leq\frac{D_{\Omega}(u)}{\sum_{x\in\delta\Omega}(u-\overline{u})^2m_x}.$$
Using the fact that $u$ is the first eigenfunction associated to $\lambda_1$ we find that
$$\left(h_E(\widetilde{\Omega})-\frac{a(k+\mu_1(k))}{2\mu_1(k)}\right)\frac{2a\mu_1(k)}{a^2+2\mu_1(k)}
\leq\frac{\lambda_1\sum_{x\in\delta\Omega}u^2(x)m_x}{\sum_{x\in\delta\Omega}(u-\overline{u})^2m_x}.$$
Since $\langle u,1\rangle_{\delta\Omega}=0$, we have
$$\frac{\sum_{x\in\delta\Omega}u^2(x)m_x}{\sum_{x\in\delta\Omega}(u-\overline{u})^2m_x}\leq1$$
and therefore
\begin{eqnarray*}\lambda_1(\Omega)&\geq&\left(h_E(\widetilde{\Omega})-\frac{a(k+\mu_1(k))}{2\mu_1(k)}\right)\frac{2a\mu_1(k)}{a^2+2\mu_1(k)}\nonumber\\
&=&\frac{\left(2h_E(\widetilde{\Omega})\mu_1(k)-a(k+\mu_1(k))\right)a}{a^2+2\mu_1(k)}.
\end{eqnarray*}
\end{proof}

\begin{rem}
The maximum of the right hand side of \eqref{escobarcheegerestimate} with respect to $a$ can be achieved at
$$a_0=\frac{2h\mu}{\sqrt{(k+\mu)^2+2h^2\mu}+(k+\mu)}$$
and the maximum is
$$\frac{h^2\mu\sqrt{(k+\mu)^2+2h^2\mu}}{2h^2\mu+(k+\mu)^2+(k+\mu)\sqrt{(k+\mu)^2+2h^2\mu}}.$$
\end{rem}

\section{Jammes-type Cheeger estimate}
\begin{prop}
Let $G$ be a finite graph and $\Omega\subset V$, then we have
$$h_J(\widetilde{\Omega})\leq 1.$$
\begin{proof}
Choose $A=\{v\},$ where $v\in\delta\Omega$. Then by the definition of $h_J(\widetilde{\Omega})$, we have $h_J(\widetilde{\Omega})\leq\frac{\mu(\partial_{\Omega} A)}{m(A\cap\delta\Omega)}=\frac{m_v}{m_v}=1$.
\end{proof}
\end{prop}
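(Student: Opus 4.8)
The plan is to bound $h_J(\widetilde{\Omega})$ from above by exhibiting a single explicit competitor for the infimum that defines it. Recall that $h_J(\widetilde{\Omega})$ is the infimum of the ratio $\mu(\partial_{\Omega}A)/m(A\cap\delta\Omega)$ over all $A\subset\overline{\Omega}$ subject to $m(A)\leq\frac12 m(\overline{\Omega})$. Consequently, any admissible $A$ for which this ratio equals $1$ forces $h_J(\widetilde{\Omega})\leq 1$. The natural candidate is the smallest possible set meeting the boundary, namely a single boundary vertex $A=\{v\}$ with $v\in\delta\Omega$.

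First I would evaluate the two quantities in the ratio for this choice. Since $v\in\delta\Omega$, we have $m(A\cap\delta\Omega)=m_v$. For the numerator, recall that $\partial_{\Omega}A=\partial A\cap E(\Omega,\overline{\Omega})$, and that for a boundary vertex the measure is defined by $m_v=\sum_{y\in\Omega,\ y\sim v}\mu_{vy}$, counting only the edges running into $\Omega$. The edges incident to $v$ that lie in $E(\Omega,\overline{\Omega})$ are precisely the edges joining $v$ to $\Omega$, so $\mu(\partial_{\Omega}A)=\sum_{y\in\Omega,\ y\sim v}\mu_{vy}=m_v$. Hence the ratio is $m_v/m_v=1$, which is exactly the value we want to realize.

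The only point demanding care is admissibility: the chosen $v$ must satisfy $m(\{v\})=m_v\leq\frac12 m(\overline{\Omega})$, and I expect this to be the sole nontrivial step. This is where the standing hypothesis $\sharp\delta\Omega\geq 2$ enters. If every $v\in\delta\Omega$ had $m_v>\frac12 m(\overline{\Omega})$, then summing the measures of any two distinct boundary vertices would already exceed $m(\overline{\Omega})$, contradicting $\delta\Omega\subset\overline{\Omega}$. Therefore at least one boundary vertex $v$ satisfies $m_v\leq\frac12 m(\overline{\Omega})$; taking $A=\{v\}$ for such a $v$ yields an admissible competitor with ratio $1$, and the bound $h_J(\widetilde{\Omega})\leq 1$ follows. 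The computation of the ratio itself is immediate from the definitions of $m_v$ and $\partial_{\Omega}A$, so once admissibility is secured the proof is complete.
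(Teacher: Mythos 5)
Your proof is correct and takes essentially the same route as the paper: the same test set $A=\{v\}$ with $v\in\delta\Omega$, and the same computation $\mu(\partial_{\Omega}A)=\sum_{y\in\Omega,\,y\sim v}\mu_{vy}=m_v=m(A\cap\delta\Omega)$. Your additional check of the constraint $m(A)\le\frac12 m(\overline{\Omega})$ is a point the paper silently skips, and your pigeonhole argument settles it; in fact every boundary vertex is automatically admissible, since each edge in $E(\Omega,\delta\Omega)$ also contributes to $m(\Omega)$, giving $m_v\le m(\delta\Omega)\le m(\Omega)$ and hence $m_v\le\frac12 m(\overline{\Omega})$ without invoking $\sharp\delta\Omega\ge 2$.
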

The eigenvalues of $\Lambda$ can be characterised by Rayleigh quotient as follows
$$\lambda_1(\Omega)=\inf_{\varphi\in\R^{\delta\Omega},\|\varphi\|_{\ell^2}=1,\varphi\perp 1}\langle\Lambda\varphi,\varphi\rangle=\inf_{\varphi\in\R^{\delta\Omega},\|\varphi\|_{\ell^2}=1,\varphi\perp 1}D_{\Omega}(u_\varphi),$$
$$\lambda_{N-1}(\Omega)=\sup_{\varphi\in\R^{\delta\Omega},\|\varphi\|_{\ell^2}=1}\langle\Lambda\varphi,
\varphi\rangle=\sup_{\varphi\in\R^{\delta\Omega},\|\varphi\|_{\ell^2}=1}D_{\Omega}(u_\varphi).$$

We denote by $\sigma_1$ the first nontrivial eigenvalue of the Dirichlet-to-Neumann operator on a compact manifold with boundary. For convenience, we recall the idea of the proof of Jammes-type Cheeger estimate, which can be divided into four steps.

Step 1: Choosing $f$ as the eigenfunction associated to $\sigma_1$, we still denote by $f$ the harmonic extension of $f$ to $M$ with $\vol(M^+)\leq\frac{\vol(M)}{2}$, where $M^+:=\{x\in M|f(x)>0\}$.

Step 2: Show that $\sigma_1=\frac{\int_{M^+}|df|^2}{\int_{\partial M^+}f^2}$, where $\partial M^+=M^+\cap\partial M.$

Step 3: By H\"{o}lder's inequality,  $\sigma_1=\frac{(\int_{M^+}f^2)(\int_{M^+}|df|^2)}{(\int_{M^+}f^2)(\int_{\partial M^+}f^2)}\geq\frac{1}{4}\frac{(\int_{M^+}|d(f^2)|)^2}{(\int_{M^+}f^2)(\int_{\partial M^+}f^2)}$.

Step 4: Set $D_t:=f^{-1}([\sqrt{t},\infty))$, $\partial_ID_t:=\partial D_t\cap \mathrm{int}(M)$ and  $\partial_ED_t:=\partial D_t\setminus\partial_ID_t$. Use Co-area formula to show that $\int_{M^+}|d(f^2)|=\int_{t\geq 0}\mathrm{Area}(\partial_ID_t)$, $\int_{M^+}f^2=\int_{t\geq 0}\vol(D_t)$ and $\int_{\partial M^+}f^2=\int_{t\geq0}\mathrm{Area}(\partial_ED_t)$.

Then by the definitions of $h_M$ and $h_J(M)$ the lower bound estimate of $\sigma_1$ follows.

Inspired by the Riemannian case, we can prove the Jammes-type Cheeger constant for $\lambda_1(\Omega)$ in the discrete setting. Let $0\neq f\in\R^{\delta\Omega}$ be an eigenfunction associated to $\lambda_1(\Omega)$. For convenience, we still denote $u_{f}$ by $f(x)$. Set $\overline{\Omega}^{+}=\{x\in\overline\Omega\mid f(x)> 0\}$  with $m(\overline{\Omega}^+)\leq\frac{m(\overline\Omega)}{2}$ (otherwise we can change the sign of $f$) and set
\begin{equation}\label{positivepart}
g(x)=\left\{
       \begin{array}{ll}
        f(x),& x\in \overline{\Omega}^{+}, \\
        0,&otherwise.
               \end{array}
     \right.
\end{equation}
For simplicity, we set $\overline{\Omega}^-:=\overline{\Omega}\setminus\overline{\Omega}^+$, $\Omega^+:=\overline{\Omega}^{+}\cap\Omega$, $\Omega^-:=\Omega\setminus\Omega^+$, $\delta^{+}\Omega:=\overline{\Omega}^{+}\cap\delta\Omega$ and $\delta^{-}\Omega:=\delta\Omega\setminus\delta^{+}\Omega$. In order to prove Jammes-type Cheeger estimate, we need the following Lemmas.

\begin{lemma}\label{lemma1}
For $g$ as in \eqref{positivepart}, we have
\begin{eqnarray}\label{eq:eq3}
\lambda_1(\Omega)\geq\frac{\sum_{e=\{x,y\}\in E(\overline{\Omega}^+,\overline{\Omega})}\mu_{xy}(g(y)-g(x))^{2}}{\sum_{x\in\delta^{+}\Omega}g^2(x)m_{x}}.
\end{eqnarray}
\end{lemma}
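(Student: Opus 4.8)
The plan is to study the Dirichlet pairing $D_{\Omega}(f,g)$, where $f$ is the harmonic extension of the $\lambda_1(\Omega)$-eigenfunction (as fixed in the text) and $g$ is its truncation to $\overline{\Omega}^{+}$ given by \eqref{positivepart}. I would establish two facts: that this pairing equals $\lambda_1(\Omega)$ times the denominator of \eqref{eq:eq3}, and that it dominates the numerator. Combining them gives the lemma.

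First I would feed $f$ and $g$ into Green's formula (Lemma \ref{Green's formula}). Since $f$ is harmonic on $\Omega$, the term $\langle\Delta f,g\rangle_{\Omega}$ vanishes, leaving
$$D_{\Omega}(f,g)=\left\langle\frac{\partial f}{\partial n},g\right\rangle_{\delta\Omega}.$$
Because $f|_{\delta\Omega}$ is an eigenfunction of $\Lambda$ with eigenvalue $\lambda_1(\Omega)$, we have $\frac{\partial f}{\partial n}(z)=\lambda_1(\Omega)f(z)$ for $z\in\delta\Omega$. On $\delta\Omega$ the truncation $g$ equals $f$ on $\delta^{+}\Omega$ and vanishes on $\delta^{-}\Omega$, so $f(z)g(z)=g^2(z)$ holds pointwise and the boundary term collapses to
$$D_{\Omega}(f,g)=\lambda_1(\Omega)\sum_{z\in\delta^{+}\Omega}g^2(z)m_z,$$
which is exactly $\lambda_1(\Omega)$ times the denominator in \eqref{eq:eq3}.

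Next I would expand $D_{\Omega}(f,g)=\sum_{e=\{x,y\}\in E(\Omega,\overline{\Omega})}\mu_{xy}(f(x)-f(y))(g(x)-g(y))$ edge by edge and compare each term with the corresponding term $\mu_{xy}(g(y)-g(x))^2$ of the numerator. Splitting the edges of $\widetilde{\Omega}$ by the sign pattern of their endpoints, if both endpoints lie in $\overline{\Omega}^{+}$ then $g=f$ there and the two contributions coincide; if neither endpoint lies in $\overline{\Omega}^{+}$ then $g$ vanishes at both, so the edge contributes $0$ on each side and is absent from $E(\overline{\Omega}^{+},\overline{\Omega})$. The only nontrivial case is a \emph{mixed} edge $\{x,y\}$ with $x\in\overline{\Omega}^{+}$ and $y\in\overline{\Omega}^{-}$: there $g(x)=f(x)>0$, $g(y)=0$ and $f(y)\le 0$, so the Dirichlet term equals $\mu_{xy}(f(x)-f(y))f(x)\ge\mu_{xy}f(x)^2=\mu_{xy}(g(y)-g(x))^2$, using $f(y)\le 0<f(x)$. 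Summing over all edges yields $D_{\Omega}(f,g)\ge\sum_{e=\{x,y\}\in E(\overline{\Omega}^{+},\overline{\Omega})}\mu_{xy}(g(y)-g(x))^2$.

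Combining the identity and the inequality gives
$$\lambda_1(\Omega)\sum_{z\in\delta^{+}\Omega}g^2(z)m_z\ge\sum_{e=\{x,y\}\in E(\overline{\Omega}^{+},\overline{\Omega})}\mu_{xy}(g(y)-g(x))^2,$$
which is precisely \eqref{eq:eq3}. I expect the mixed-edge comparison to be the crux: it is exactly where the sign structure of the eigenfunction is used, converting an equality on the pure-sign edges into the one-sided inequality that drives the estimate. This is the discrete counterpart of the clean identity $\sigma_1=\int_{M^{+}}|df|^2/\int_{\partial M^{+}}f^2$ in the Riemannian Step 2, where the interface $\{f=0\}$ can be cut exactly; on a graph the edges straddling the sign change cannot be cut cleanly, and the inequality absorbs precisely this defect.
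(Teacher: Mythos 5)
Your proof is correct and follows essentially the same route as the paper: your Green's-formula identity $D_{\Omega}(f,g)=\lambda_1(\Omega)\sum_{x\in\delta^{+}\Omega}g^2(x)m_x$ is exactly the paper's computation $\langle\Delta f,g\rangle_{\overline{\Omega}^{+}}=-\lambda_1(\Omega)\sum_{x\in\delta^{+}\Omega}g^2(x)m_x$ in Lemma \ref{lemma1}, just organized via the stated Green's formula rather than a hand-done summation by parts. Your edgewise comparison --- equality on edges with both endpoints in $\overline{\Omega}^{+}$, and $(f(x)-f(y))(g(x)-g(y))\geq (g(x)-g(y))^2$ on mixed edges using $f(y)\leq 0$ --- is precisely the paper's key inequality, so there is nothing to fix.
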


\begin{proof}
Notice that it suffices to consider graph $\widetilde{\Omega}$. Hence for any $x\in\delta\Omega$, we have
$$\Delta f(x)=\frac{1}{m_x}\sum_{y\in\Omega}\mu_{xy}(f(y)-f(x))=-\frac{\partial f(x)}{\partial n}.$$ Then
\begin{eqnarray}\label{eigenvaluelowerestimate}
\langle\Delta f(x),g(x)\rangle_{\overline{\Omega}^+}&=&\sum_{x\in\Omega^+}\Delta f(x)g(x)m_x+\sum_{x\in\delta^+\Omega}\Delta f(x)g(x)m_x\nonumber\\
&=&-\lambda_1(\Omega)\sum_{x\in\delta^+\Omega}g^2(x)m_x.
\end{eqnarray}
Notice that
\begin{eqnarray*}
&&\langle\Delta f(x),g(x)\rangle_{\overline{\Omega}^+}=\left(\sum_{x\in\overline{\Omega}^+}\sum_{y\in\overline{\Omega}^+}+
\sum_{x\in\overline{\Omega}^+}\sum_{y\in\overline{\Omega}^-}\right)\mu_{xy}(f(y)-f(x))g(x)\nonumber\\
&=&-\frac12\sum_{x,y\in\overline{\Omega}^+}\mu_{xy}(f(y)-f(x))(g(y)-g(x))-\sum_{x\in\overline{\Omega}^+}
\sum_{y\in\overline{\Omega}^-}\mu_{xy}(f(x)-f(y))(g(x)-g(y))\nonumber\\
&\leq&-\sum_{e=\{x,y\}\in E(\overline{\Omega}^+,\overline{\Omega}^+)}\mu_{xy}(g(y)-g(x))^2-\sum_{x\in\overline{\Omega}^+}
\sum_{y\in\overline{\Omega}^-}\mu_{xy}(g(x)-g(y))^2\nonumber\\
&=&-\sum_{e=\{x,y\}\in E(\overline{\Omega}^+,\overline{\Omega})}\mu_{xy}(g(y)-g(x))^{2}.
\end{eqnarray*}
Then the lemma follows in view of \eqref{eigenvaluelowerestimate}.
\end{proof}
Multiplying both the numerator and denominator of the fraction in the right hand side of \eqref{eq:eq3} by $\sum_{x\in\overline{\Omega}^+}g^2(x)m_x$ and setting
$$\frac{P}{Q}:=\frac{\sum_{x\in\overline{\Omega}^+}g^2(x)m_x\cdot \sum_{e=\{x,y\}\in E(\overline{\Omega}^+,\overline{\Omega})}\mu_{xy}(g(y)-g(x))^{2}}{\sum_{x\in\overline{\Omega}^+}g^2(x)m_x\cdot\sum_{x\in
\delta^{+}\Omega}g^2(x)m_{x}},$$
we have
\begin{eqnarray}
\lambda_1(\Omega)\geq\frac{P}{Q}.
\end{eqnarray}

\begin{lemma}\label{lemma2}
$$P\geq\frac12\left(\sum_{e=\{x,y\}\in E(\overline{\Omega}^+,\overline{\Omega})}|g^2(x)-g^2(y)|\mu_{xy}\right)^2.$$
\end{lemma}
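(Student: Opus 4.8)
The plan is to derive the estimate from the Cauchy--Schwarz inequality combined with the factorization $g^2(x)-g^2(y)=(g(x)-g(y))(g(x)+g(y))$. Since $P$ is the product of $\sum_{x\in\overline{\Omega}^+}g^2(x)m_x$ with the energy $\sum_{e=\{x,y\}\in E(\overline{\Omega}^+,\overline{\Omega})}\mu_{xy}(g(y)-g(x))^2$, the first step is to split each weight $\mu_{xy}=\sqrt{\mu_{xy}}\cdot\sqrt{\mu_{xy}}$ and apply Cauchy--Schwarz to the sum $\sum_e\mu_{xy}|g(x)-g(y)|\,|g(x)+g(y)|$, which gives
$$\left(\sum_{e=\{x,y\}\in E(\overline{\Omega}^+,\overline{\Omega})}\mu_{xy}|g^2(x)-g^2(y)|\right)^2\le\left(\sum_{e}\mu_{xy}(g(x)-g(y))^2\right)\left(\sum_{e}\mu_{xy}(g(x)+g(y))^2\right),$$
where here and below $\sum_e$ abbreviates $\sum_{e=\{x,y\}\in E(\overline{\Omega}^+,\overline{\Omega})}$.

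Next I would dominate the second factor on the right. Using the elementary inequality $(g(x)+g(y))^2\le 2(g^2(x)+g^2(y))$ reduces matters to the weighted sum $\sum_{e}\mu_{xy}(g^2(x)+g^2(y))$, and the key point is that this equals $\sum_{x\in\overline{\Omega}^+}g^2(x)m_x$. Because $g$ vanishes outside $\overline{\Omega}^+$, only endpoints in $\overline{\Omega}^+$ contribute; reorganizing the sum by vertices, the coefficient of $g^2(w)$ for $w\in\overline{\Omega}^+$ is the total $\mu$-weight of the edges of $\widetilde{\Omega}$ incident to $w$, which is exactly $m_w$. This is the same computation already carried out in the proof of Theorem \ref{escobarlowerbound}, where one shows $\sum_{e\in E(\Omega,\overline{\Omega})}\mu_{xy}(v^2(x)+v^2(y))=\sum_{x\in\overline{\Omega}}v^2(x)m_x$.

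Assembling these, I obtain
$$\left(\sum_{e=\{x,y\}\in E(\overline{\Omega}^+,\overline{\Omega})}\mu_{xy}|g^2(x)-g^2(y)|\right)^2\le 2\left(\sum_{e}\mu_{xy}(g(x)-g(y))^2\right)\sum_{x\in\overline{\Omega}^+}g^2(x)m_x=2P,$$
which is exactly the claim. The one point that needs care is the vertex reorganization: for a boundary vertex $w\in\delta^+\Omega$ only the edges joining $w$ to $\Omega$ lie in $\widetilde{\Omega}$, so their total weight is $m_w$ and not the full degree of $w$ in $G$, and one must also confirm that every edge of $\widetilde{\Omega}$ incident to such a $w$ does belong to $E(\overline{\Omega}^+,\overline{\Omega})$ so that no contribution is lost. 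Beyond this bookkeeping the argument is a routine use of Cauchy--Schwarz and the inequality $(a+b)^2\le 2(a^2+b^2)$.
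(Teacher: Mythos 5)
Your proof is correct and follows essentially the same route as the paper: the paper's proof of Lemma \ref{lemma2} first establishes the identity $\sum_{e=\{x,y\}\in E(\overline{\Omega}^+,\overline{\Omega})}(g^2(x)+g^2(y))\mu_{xy}=\sum_{x\in\overline{\Omega}^{+}}g^2(x)m_x$ by the same vertex--edge reorganization, then applies $g^2(x)+g^2(y)\geq\frac12(g(x)+g(y))^2$ and Cauchy--Schwarz on the factorization $g^2(x)-g^2(y)=(g(x)-g(y))(g(x)+g(y))$ --- exactly your three ingredients, merely in the reverse order. Your bookkeeping point about boundary vertices (that $m_w$ for $w\in\delta^{+}\Omega$ counts only edges into $\Omega$, which are precisely the edges of $\widetilde{\Omega}$ incident to $w$) correctly mirrors the paper's implicit restriction to the graph $\widetilde{\Omega}$.
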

\begin{proof}
Note that
\begin{eqnarray*}
&&\sum_{x\in\overline{\Omega}^{+}}g^2(x)m_x=\left(\sum_{x\in\overline{\Omega}^{+}}\sum_{y\in\overline{\Omega}^+}+\sum_{x\in
\overline{\Omega}^{+}}\sum_{y\in\overline{\Omega}^-}\right)g^2(x)\mu_{xy}\nonumber\\
&=&\frac12\sum_{x,y\in\overline{\Omega}^+}(g^2(x)+g^2(y))\mu_{xy}+\sum_{x\in
\overline{\Omega}^{+}}\sum_{y\in\overline{\Omega}^-}g^2(x)\mu_{xy}\nonumber\\
&=&\sum_{e=\{x,y\}\in E(\overline{\Omega}^+,\overline{\Omega}^+)}(g^2(x)+g^2(y))\mu_{xy}+\sum_{x\in
\overline{\Omega}^{+}}\sum_{y\in\overline{\Omega}^-}g^2(x)\mu_{xy}\nonumber\\
&=&\sum_{e=\{x,y\}\in E(\overline{\Omega}^+,\overline{\Omega})}(g^2(x)+g^2(y))\mu_{xy}.
\end{eqnarray*}
Hence
\begin{eqnarray*}
P&=&\sum_{e=\{x,y\}\in E(\overline{\Omega}^+,\overline{\Omega})}(g^2(x)+g^2(y))\mu_{xy}\cdot \sum_{e=\{x,y\}\in E(\overline{\Omega}^+,\overline{\Omega})}\mu_{xy}(g(y)-g(x))^{2}\nonumber\\
&\geq&\frac12\sum_{e=\{x,y\}\in E(\overline{\Omega}^+,\overline{\Omega})}(g(x)+g(y))^2\mu_{xy}\cdot\sum_{e=\{x,y\}\in E(\overline{\Omega}^+,\overline{\Omega})}\mu_{xy}(g(y)-g(x))^{2}\nonumber\\
&\geq&\frac12\left(\sum_{e=\{x,y\}\in E(\overline{\Omega}^+,\overline{\Omega})}|g^2(x)-g^2(y)|\mu_{xy}\right)^2.
\end{eqnarray*}
The last inequality follows from H\"{o}lder's inequality.
\end{proof}
For any $t>0$, set $D_t:=g^{-1}([\sqrt{t},+\infty))=\{x\in\overline\Omega|g^2(x)\geq t\}$. Then we have $m(D_t)\leq m(\overline{\Omega}^+)\leq\frac{m(\overline\Omega)}{2}$.
\begin{lemma}
$$\int_{0}^{\infty}\mu(\partial D_t\cap E(\Omega,\overline\Omega))dt=\sum_{e=\{x,y\}\in E(\overline{\Omega}^+,\overline{\Omega})}\mu_{xy}|g^2(x)-g^2(y)|.$$
\end{lemma}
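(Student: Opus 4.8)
The plan is to recognize this identity as the \emph{discrete co-area formula} (Lemma~\ref{discretecoareaformula}) applied to the function $g^2\in\R^{\overline\Omega}$, after reconciling the index set $E(\overline\Omega^+,\overline\Omega)$ appearing on the right-hand side with the full edge set $E(\Omega,\overline\Omega)$ used in that lemma. The only structural facts I will invoke are that $g\ge 0$ everywhere and that, by definition, $D_t=\{x\in\overline\Omega\mid g^2(x)\ge t\}$.

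First I would fix $t>0$ and describe $\partial D_t\cap E(\Omega,\overline\Omega)$ combinatorially: an edge $e=\{x,y\}$ lies in it exactly when one endpoint belongs to $D_t$ and the other does not, i.e.\ when exactly one of $g^2(x)\ge t$ and $g^2(y)\ge t$ holds. Orienting each edge so that $g^2(x)\le g^2(y)$, this is equivalent to $g^2(x)<t\le g^2(y)$. Thus $\mu(\partial D_t\cap E(\Omega,\overline\Omega))=\sum_{e=\{x,y\}\in E(\Omega,\overline\Omega),\,g^2(x)<t\le g^2(y)}\mu_{xy}$, which is precisely the integrand on the right-hand side of Lemma~\ref{discretecoareaformula} for the function $g^2$ (with $\sigma=t$). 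Integrating over $t\in\R$ and invoking the lemma gives $\int_{-\infty}^{\infty}\mu(\partial D_t\cap E(\Omega,\overline\Omega))\,dt=\sum_{e=\{x,y\}\in E(\Omega,\overline\Omega)}\mu_{xy}|g^2(x)-g^2(y)|$. Since $g^2\ge 0$, for $t\le 0$ one has $D_t=\overline\Omega$ and $\partial D_t=\emptyset$, so no edge is ever cut; hence the lower limit may be replaced by $0$. Equivalently, and avoiding any appeal to the earlier lemma, one can argue directly by Tonelli's theorem: the edge sum is finite and the integrand nonnegative, so $\int_0^\infty\mu(\partial D_t\cap E(\Omega,\overline\Omega))\,dt=\sum_e\mu_{xy}\int_0^\infty \mathbf 1_{\{e\in\partial D_t\}}\,dt$, and for an edge oriented with $g^2(x)\le g^2(y)$ the inner integral equals the length of $(g^2(x),g^2(y)]\cap(0,\infty)=g^2(y)-g^2(x)=|g^2(x)-g^2(y)|$, again because $g^2(x)\ge 0$.

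It then remains only to reconcile the two index sets. Any edge $\{x,y\}\in E(\Omega,\overline\Omega)$ that does not meet $\overline\Omega^+$ has both endpoints in $\overline\Omega^-$, where $g\equiv 0$; such an edge contributes $|g^2(x)-g^2(y)|=0$ to the right-hand sum and, for $t>0$, never lies in $\partial D_t$. Therefore restricting the summation from $E(\Omega,\overline\Omega)$ to $E(\overline\Omega^+,\overline\Omega)$ alters neither side, and the claimed identity follows. There is no substantive obstacle here: the argument is essentially a one-line application of the co-area formula, and the only care required is this bookkeeping between the two edge sets together with the verification that the integration range $(0,\infty)$ captures exactly $|g^2(x)-g^2(y)|$, which is where the hypothesis $g\ge 0$ enters.
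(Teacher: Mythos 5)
Your proof is correct and follows the same route as the paper, which simply invokes Lemma~\ref{discretecoareaformula} with $f=g^2$ restricted to the edge set $E(\overline{\Omega}^+,\overline{\Omega})$; your write-up merely makes explicit the two pieces of bookkeeping the paper leaves tacit (edges with both endpoints in $\overline{\Omega}^-$ contribute zero, and $g^2\ge 0$ lets the integration range shrink to $(0,\infty)$). The alternative Tonelli computation you sketch is just the proof of the co-area lemma unwound, so it does not constitute a different approach.
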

\begin{proof}
It follows from Lemma \ref{discretecoareaformula} by setting $f=g^2$ and considering the edge set $E(\overline{\Omega}^+,\overline{\Omega})$.
\end{proof}

\begin{lemma}\label{lemma4}
$$\int_{0}^{\infty}m(D_t)dt=\sum_{x\in\overline{\Omega}^+}g^2(x)m_x .$$
$$\int_{0}^{\infty}m(D_t\cap\delta\Omega)dt=\sum_{x\in\delta^+\Omega}g^2(x)m_x.$$
\end{lemma}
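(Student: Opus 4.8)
The plan is to recognize both identities as instances of the layer-cake (Cavalieri) principle on the finite measure space $(\overline{\Omega},m)$. First I would rewrite the measure of the superlevel set as a finite sum of indicators, using the definition $D_t=\{x\in\overline{\Omega}\mid g^2(x)\geq t\}$:
$$m(D_t)=\sum_{x\in\overline{\Omega}}m_x\,\chi_{[0,\,g^2(x)]}(t),$$
so that the event $\{g^2(x)\geq t\}$ contributes to the $x$-th term precisely when $0\leq t\leq g^2(x)$. Integrating over $t\in(0,\infty)$ and interchanging the finite sum with the integral—which needs no justification beyond finiteness of $\overline{\Omega}$—reduces everything to the elementary one-variable integral $\int_{0}^{\infty}\chi_{[0,\,g^2(x)]}(t)\,dt=g^2(x)$ for each fixed vertex $x$.

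This yields $\int_{0}^{\infty}m(D_t)\,dt=\sum_{x\in\overline{\Omega}}g^2(x)m_x$. Since $g(x)=0$ for every $x\in\overline{\Omega}^-=\overline{\Omega}\setminus\overline{\Omega}^+$ by the definition \eqref{positivepart} of $g$, the summands with $x\notin\overline{\Omega}^+$ vanish, and the sum collapses to $\sum_{x\in\overline{\Omega}^+}g^2(x)m_x$, which is the first identity. The second identity follows by the verbatim argument with the index set $\overline{\Omega}$ replaced by $\delta\Omega$: one writes $m(D_t\cap\delta\Omega)=\sum_{x\in\delta\Omega}m_x\,\chi_{[0,\,g^2(x)]}(t)$, performs the same swap-and-integrate step, and then drops the vanishing terms indexed by $\delta^-\Omega=\delta\Omega\setminus\delta^+\Omega$ to arrive at $\sum_{x\in\delta^+\Omega}g^2(x)m_x$.

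I do not expect any real obstacle here, as the computation is purely formal once the indicator representation is in place. The only point meriting a word of care is the boundary case $g^2(x)=t$: whether one defines $D_t$ via $\geq t$ or $>t$ alters the integrand only at a single value of $t$ for each $x$, hence leaves the integral unchanged. In particular there is no competition with the discrete co-area formula (Lemma \ref{discretecoareaformula}) used in the preceding lemma; the present statement is the strictly simpler distribution-function identity for the vertex measure $m$ rather than for the edge weights.
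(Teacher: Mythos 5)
Your proposal is correct and is essentially the paper's own proof: the paper likewise writes $m(D_t)=\sum_{x}m_x\chi_{(0,g^2(x)]}(t)$, swaps the finite sum with the integral, and evaluates $\int_0^\infty\chi_{(0,g^2(x)]}(t)\,dt=g^2(x)$, exactly your layer-cake computation. The only cosmetic difference is that you sum over all of $\overline{\Omega}$ (resp.\ $\delta\Omega$) and then drop the terms vanishing off $\overline{\Omega}^+$ (resp.\ $\delta^+\Omega$), whereas the paper restricts the index set from the outset; your remark that the choice of $\geq t$ versus $>t$ changes the integrand only on a null set correctly disposes of the one boundary subtlety.
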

\begin{proof}
Similar to Lemma \ref{discretecoareaformula}, we have
\[\int_{0}^{\infty}m(D_t)dt = \int_{0}^{\infty}\sum_{x\in D_t}m_xdt = \int_{0}^{\infty}\sum_{x\in\overline{\Omega}^+}m_x\chi_{(0,g^2(x)]}(t)dt = \sum_{x\in\overline{\Omega}^+}g^2(x)m_x
\]
and
\[
\int_{0}^{\infty}m(D_t\cap\delta\Omega)dt = \int_{0}^{\infty}\sum_{x\in\delta^+\Omega}m_x\chi_{(0,g^2(x)]}(t)dt
 =\sum_{x\in\delta^+\Omega}g^2(x)m_x.
\]
\end{proof}

Now we are ready to prove Theorem \ref{thm:main3}.
\begin{proof}[Proof of Theorem~\ref{thm:main3}]

Combining with the above Lemmas (Lemma \ref{lemma1} to \ref{lemma4}), we have
\begin{eqnarray*}
\lambda_1(\Omega)&\geq&\frac{1}{2}\frac{\int_{0}^{\infty}\mu(\partial D_t\cap E(\Omega,\overline\Omega))dt\cdot\int_{0}^{\infty}\mu(\partial D_t\cap E(\Omega,\overline\Omega))dt}{\sum_{x\in\overline{\Omega}^+}g^2(x)m_x\cdot\sum_{x\in
\delta^{+}\Omega}g^2(x)m_{x}}\nonumber\\
&\geq&\frac{1}{2}\frac{\int_{0}^{\infty}h(\widetilde{\Omega})m(D_t)dt\cdot\int_{0}^{\infty}h_J(\widetilde{\Omega})
m(D_t\cap\delta\Omega)dt}{\sum_{x\in\overline{\Omega}^+}g^2(x)m_x\cdot\sum_{x\in
\delta^{+}\Omega}g^2(x)m_{x}}\nonumber\\
&=&\frac{h(\widetilde{\Omega})h_J(\widetilde{\Omega})}{2}\frac{\int_{0}^{\infty}m(D_t)dt\cdot\int_{0}^{\infty}m( D_t\cap\delta\Omega)dt}{\sum_{x\in\overline{\Omega}^+}g^2(x)m_x\cdot\sum_{x\in
\delta^{+}\Omega}g^2(x)m_{x}}\nonumber\\
&=&\frac{h(\widetilde{\Omega})h_J(\widetilde{\Omega})}{2}.
\end{eqnarray*}
\end{proof}

From Theorem \ref{thm:main3}, we know that $h(\widetilde{\Omega})$ plays an important role in Jammes-type Cheeger estimate. Recall that $\zeta_1(\widetilde{\Omega})$ is the first nontrivial eigenvalue of the Laplace operator with no boundary condition on $\widetilde\Omega$ and the classical Cheeger estimates reads as
\begin{eqnarray}\label{finitecheegerestimate}
2h(\widetilde{\Omega})\geq\zeta_1(\widetilde{\Omega})\geq\frac{h(\widetilde{\Omega})^2}{2},
\end{eqnarray}
see \cite[p.26]{Chung97}. Then we are ready to prove Corollary \ref{application}
\begin{proof}[Proof of Corollary~\ref{application}]
Recall that $h(\widetilde{\Omega})\leq h_J(\widetilde{\Omega})$. Combining with Theorem \ref{thm:main3} and \eqref{finitecheegerestimate}, we have
\begin{eqnarray}\label{eigenvaluecomparesion}\lambda_1(\Omega)\geq \frac{h(\widetilde{\Omega})^2}{2}\geq\frac{(\zeta_{1}(\widetilde{\Omega}))^2}{8}.\end{eqnarray}
\end{proof}

Finally we give an example to show that $\lambda_1$ can't be bounded from below using only $h_J(\widetilde{\Omega})$.

\begin{example}\label{exapmle}
Consider a path graph with even vertices n ($n\geq 6$) and unit edge weights as shown in Figure 1. Set $\Omega=\{v_2,v_3,\cdots,v_{n-1}\}$, $\delta_E\Omega=\{v_1,v_n\}$. Then we have

\begin{equation*}
\Lambda=\left(\begin{array}{cc}
\frac{1}{n-1} & -\frac{1}{n-1} \\
-\frac{1}{n-1} & \frac{1}{n-1}
\end{array}
\right).
\end{equation*}
Hence $\lambda_0(\Omega)=0$ and $\lambda_1(\Omega)=\frac{2}{n-1}$. Choosing $A=\{v_1,v_2,\cdots,v_{\frac{n}{2}}\}$, we obtain that  $h(\widetilde{\Omega})=\frac{1}{n-1}$ and $h_J(\widetilde{\Omega})=1$. Hence the Jammes-type Cheeger estimate we obtained is asymptotically sharp of the same order $\frac{1}{n-1}$ on both sides as $n\rightarrow\infty$. Moreover, one can not obtain that $\lambda_1(\Omega)\geq F(h_J(\widetilde{\Omega}))$ for any positive function $F$.

\end{example}

\bibliography{DTNproblems}
\bibliographystyle{alpha}

\end{document}